\newtheorem{theorem}{Theorem}[section]
\newtheorem{proposition}[theorem]{Proposition}
\newtheorem{corollary}[theorem]{Corollary}
\theoremstyle{definition}
\newtheorem{example}[theorem]{Example}
\theoremstyle{remark}
\newtheorem{remark}[theorem]{Remark}
\numberwithin{equation}{section}
\begin{document}

% \title[short text for running head]{full title}
\title[Statistical conservation laws]{Numerical approximations to statistical conservation laws for scalar hyperbolic equations}

%    Only \author and \address are required; other information is
%    optional.  Remove any unused author tags.

%    author one information
% \author[short version for running head]{name for top of paper}

\author[Qian Huang]{Qian Huang*}
\address{Institute of Applied Analysis and Numerical Simulation, University of Stuttgart, 70569 Stuttgart, Germany}
%\curraddr{}
\email{qian.huang@mathematik.uni-stuttgart.de; hqqh91@qq.com}
\thanks{* Corresponding author}

\author[Christian Rohde]{Christian Rohde}
\address{Institute of Applied Analysis and Numerical Simulation, University of Stuttgart, 70569 Stuttgart, Germany}
\email{christian.rohde@mathematik.uni-stuttgart.de}

%    \subjclass is required.
\subjclass[2020]{Primary 35L65 · 35R60 · 35Q35}

\keywords{Statistical conservation laws, hyperbolic balance laws, probability density functions, kinetic equations, dissipative anomaly, vanishing viscosity limit} 

\date{}

\dedicatory{}

%    Abstract is required.
\begin{abstract}
  Motivated by the statistical description of turbulence, we study statistical conservation laws in the form of kinetic-type PDEs for joint probability density functions (PDFs) and cumulative distribution functions (CDFs) associated with solutions of scalar balance laws. Starting from viscous balance laws, the resulting PDF/CDF equations involve \emph{unclosed} conditional averages arising in the viscous terms. We show that these terms exhibit a dissipative anomaly: they remain non-negligible in the vanishing viscosity limit and are essential to preserve the nonnegativity of evolving PDFs. To approximate these PDF/CDF equations in a unified framework, we propose a novel sampling-based estimator for the unclosed terms, constructed from numerical or exact realizations of the underlying balance-law solutions. In certain cases, \emph{a priori} error bounds can be derived, demonstrating that the deviation between the true and approximate CDFs is controlled by the estimation error of the unclosed terms. Numerical experiments with analytically solvable test problems confirm that the sampling-based approximation converges satisfactorily with the number of samples.
\end{abstract}

\maketitle

%    Text of article.
\section{Introduction}

The study of statistical conservation laws is motivated by the inherently statistical nature of turbulence, where uncertain measurements lead to markedly different predictions \cite{MYturbulence}. The fluid velocity and pressure $(u_f,p_f): (t,x)\in\mathbb R_+ \times \mathbb R^d \mapsto \mathbb R^d\times \mathbb R$ are governed by the incompressible Navier-Stokes equations (NSE), and uncertainties are naturally modeled as random perturbations of the initial and/or boundary data. Statistical conservation laws refer to kinetic equations (conservation laws) for the joint probability density functions (PDFs) $f^{(K)} = f^{(K)}(t, x_1, v_1,\dots, x_K, v_K)$ of the velocity field $u_f$ at multiple spatial locations  $\{x_i\}_1^K$ ($K\in\mathbb N$) and time instants \cite{Oberlack2010,friedrich2012}. For $Q^i \subset \mathbb R^d$ Borel sets ($i=1,\dots,K$), we have
\begin{equation} \label{eq:def_fK}
  \text{Prob}(\{ u_f(t,x_i)\in Q^i \text{ for all } i \}) = \int_{\prod Q^i} f^{(K)}(t,x_1,v_1,\dots,x_K,v_K) \, dv_1\cdots dv_K. 
\end{equation}

These joint PDFs encode the complete statistical information of the turbulent velocity field. For example, the mean velocity $\langle u_f \rangle(t,x)$ and the two-point correlation $S_r(\ell;x) = \langle |(u_f(x+\ell n)-u_f(x))\cdot n|^r\rangle$ (with $n\in\mathbb R^d$ a unit vector and $\ell>0$) are moments of $f:=f^{(1)}(t,x,v)$ and $f^{(2)}(t,x,v_1,x+\ell n, v_2)$, respectively. By $\langle\cdot\rangle$ we denote the mean with respect to the given probability measure. The Reynolds-averaged NSE governing $\langle u_f \rangle$ are central in engineering practice \cite{Popeturbulence}. For near-wall shear flows, $\langle u_f \rangle$ obeys a logarithmic law \cite{Popeturbulence}. As for the structure function $S_r(\ell;x)$, it follows Kolmogorov’s scaling law $S_r(\ell;x) \sim \ell^{\zeta_r}$ with $\zeta_r=r/3$ in the inertial range for $d=3$ \cite{Kolmogorov1941}. This scaling holds for $r=2,3$, but shows significant deviations for larger $r$ \cite{Ishihara2009}.

Due to the limited spatio-temporal validity of these phenomenological theories, statistical conservation laws provide a promising unified framework for capturing velocity statistics across all scales \cite{friedrich2012}. Generally they read as $\partial_t f^{(K)}=L_Kf^{(K)}$, with $L_K$ a problem-specific operator. As with many statistical formulations of NSE, however, these equations are \emph{not} closed: $\partial_t f^{(K)} = C_K f^{(K)} + U_K$, where $C_K$ is known and $U_K$ the unclosed part, typically arising from pressure and viscous terms in NSE. This term can be expressed as $L_{K,K+1}(f^{(K)},f^{(K+1)})$, forming the infinite Lundgren-Monin-Novikov (LMN) hierarchy \cite{Lundgren,Monin1967,Novikov1968}, or equivalently through unknown conditional averages such as $\langle \Delta_{x_i} u_f | u_f = v_i \rangle$ \cite{friedrich2012}. Various analytical closures for $U_K$ have been proposed, including statistical independence closures \cite{TATSUMI_2011}, `Markov-process-in-scale' ansatz for velocity increments \cite{Friedrich1997,friedrich2017}, and DNS-data-driven closures \cite{Boffetta2002}. 

Statistical conservations laws also arise in other incompressible flow problems. The `PDF method' for turbulent combustion employs velocity-composition joint PDFs to model turbulence-reaction coupling in the incompressible NSEs with varying densities \cite{POPE1985119}. Passive scalar transport in random (incompressible) velocity fields provides another classical example, with long-time scalar PDF behavior analyzed in \cite{Sinai1989}.

By contrast, statistical conservation laws for compressible flows and systems of hyperbolic balance laws remain largely unexplored. When the divergence-free condition is absent, even the spatial transport term of the kinetic equations acquires nonlocal contributions (see Appendix~\ref{app:sys} for preliminary PDF equations for 1D systems). These PDF equations may connect to the theory of statistical solutions of conservation laws \cite{Fjordholm2017,Fjordholm2018,Lanthaler2021,giess2021}. A key difficulty is dimensionality: the $K$-point PDF in (\ref{eq:def_fK}) depends on $1+2dK$ variables. Therefore, most existing work focuses on the scalar setting:
\begin{equation} \label{eq:scalar}
\begin{split}
  & \partial_tu + \nabla_{x} \cdot g(u) = Q(u,\nabla_x u, \Delta_x u), \\
  & u(0,x,\xi(\omega)) = u_0(x,\xi(\omega)),
\end{split}
\quad t>0, x\in D \subset \mathbb R^d, \omega\in \Omega,
\end{equation}
where $u = u(t,x, \xi(\omega)) \in \mathbb R$, $g=g(u)$ is a smooth flux in $\mathbb R^d$, $\xi: \Omega\to \mathbb R^{N_\xi}$ is an absolutely-continuous random variable with a prescribed density function $p(\xi)$, and $Q$ represents possible source terms. For scalar conservation laws ($Q=0$), the `method of distributions' derives governing equations for single-point cumulative distribution functions (CDFs), which involve only spatial transport \cite{tartakovsky2015,boso2020}. If shocks occur in the solution $u$, a kinetic defect source term must be added \cite{perthame2002}, which is generally unknown a prior but may be inferred from data \cite{boso2020}. When $Q=Q(u)$, the PDF equation is closed for smooth $u$ \cite{Ven2012}, equivalent to the mentioned CDF formulation (see also Section \ref{sec:derivation}). For $Q=Q(u,\nabla_x u)$, closed equations for the joint $u$–$\nabla_x u$ PDF can be derived in one dimension for smooth solutions \cite{Ven2012}, while discontinuities will again introduce defect terms to these kinetic equations.

Another important line of research concerns the viscous Burgers equation with random forcing, corresponding to $g(u)=u^2/2$ and $Q=\epsilon \Delta_x u$ in (\ref{eq:scalar}), often viewed as a scalar analogue of NSE without pressure \cite{BEC20071,BENZI20231}. Governing equations for single- and two-point PDFs have been analyzed, some with analytical closure proposals \cite{Polyakov1995,Gotoh1998,E2000}. Notably, these studies clarified the statistics of velocity gradients and resolved a long-standing debate about their tail behavior \cite{E2000}.

The objective of this paper is to investigate numerical approximations to statistical conservation laws for hyperbolic balance laws, focusing on the scalar case (\ref{eq:scalar}). Unlike earlier studies \cite{Ven2012,cho2014,boso2020}, we consider viscous balance laws with $Q = \epsilon\Delta_x u$ in (\ref{eq:scalar}) to mimic the interplay between hyperbolic transport and viscosity operators. The viscous term ensures global existence of classical solutions of $u$, but leads once again to unclosed statistical conservation laws. In Section~\ref{sec:derivation} we systematically derive these laws. A previously-undocumented \emph{dissipative anomaly} is further revealed: for nonlinear fluxes $g(u)$ in (\ref{eq:scalar}), the viscosity-related unclosed term $U_K$ remains essential even in the vanishing viscosity limit, as it guarantees the positivity of PDFs. Since analytical closures of $U_K$ often introduce very different model structures, our approach does not attempt to derive them. Instead, we approximate $U_K$ directly from sampled solutions of $u$, a simple yet, to our knowledge, unexplored pipeline. This sampling-based method is flexible and extends naturally to statistical conservation laws of other systems, including NSE. To validate it, we construct analytical test cases for (\ref{eq:scalar}). Numerical results confirm the a priori error analysis and show that PDF/CDF errors diminish with increasing sample size.

Finally, we remark that the present setting (\ref{eq:scalar}) coincides with random hyperbolic conservation laws. Many numerical strategies exist for such problems, including Monte Carlo, stochastic collocation, and generalized polynomial chaos expansions (see, e.g., \cite{ABGRALL2017507,beck2020,chertock2024}). Recently, methods based on Young-measure-valued solutions and entropy minimization have been developed \cite{chu2025}. Most of these approaches, however, target only pointwise statistics (moments or single-point PDFs). In contrast, the statistical conservation law framework naturally extends to multi-point joint distributions, offering a structure-based route for capturing correlations as an essential feature in turbulence. Even for single-point PDFs, our numerical experiments indicate that leveraging the PDE structure (though unclosed) `enhances sample efficiency' and yields more accurate results than reconstructing the distribution directly from samples, as in Monte Carlo-type methods. Thus, statistical conservation laws provide a unifying and potentially more predictive perspective.

The rest of the paper is organized as follows. Section~\ref{sec:derivation} derives the statistical conservation laws for (\ref{eq:scalar}), with extensions to systems in Appendix A. Section~\ref{sec:pos} analyzes positivity-preserving properties of the PDF equations. Section~\ref{sec:approx} introduces the sampling-based approximation algorithm. Analytical and numerical test cases are discussed in Sections~\ref{sec:cases} and \ref{sec:numerical}, respectively. Conclusions are drawn in Section~\ref{sec:conclusion}.

\section{Statistical conservation laws for scalar problems (\ref{eq:scalar})} \label{sec:derivation}

In this section we derive the statistical conservation laws for the scalar balance law (\ref{eq:scalar}) with viscous term $Q=\epsilon \Delta_x u$ ($\epsilon>0$ constant) and random initial data. Both PDF- and CDF-based formulations are presented. Two different formal derivations are outlined.

Assume that the $K$-point joint PDF of $u$ exists and is defined by
\begin{equation} \label{eq:f_def}
  f^{(K)} = f^{(K)}(t,x_1,v_1,\dots,x_K,v_K): = \mathbb E_\xi \left[ \prod\nolimits_{i=1}^K \delta\Big(u(t,x_i,\xi(\omega))-v_i\Big) \right],
\end{equation}
with $v_i\in\mathbb R$ and $\delta(\cdot)$ denoting the Dirac distribution. The expectation $\mathbb E_\xi$ is taken with respect to the random variable $\xi(\omega)$ associated with the initial condition in (\ref{eq:scalar}). The PDF is nonnegative $f^{(K)}\ge 0$ and depends on $1+K(d+1)$ internal variables. It satisfies $\int_{\mathbb R^K} f^{(K)} \prod dv_i = 1$ (normalization), $\int_{\mathbb R} f^{(K)}(\cdot,x_K,v_K) \,dv_K = f^{(K-1)}$ (marginalization), and exhibits the `coincidence' behaviors like \cite{Lundgren}
\[
  \lim_{x_\beta\to x_\alpha} f^{(K)}(\cdot,x_\alpha,v_\alpha,x_\beta,v_\beta) = f^{(K-1)}(\cdot,x_\alpha,v_\alpha)\delta(v_\alpha-v_\beta), \quad \forall \alpha,\beta=1,\dots,K.
\]
The governing equation for $f^{(K)}$ is
\begin{equation} \label{eq:fN}
\left\{
\begin{aligned}
  &\partial_tf^{(K)} + \sum_{i=1}^K \nabla_{x_i} \cdot \partial_{v_i}\left( g'(v_i)\int_{-\infty}^{v_i} f^{(K)}(\cdot,\tilde v_i) \, d\tilde v_i \right) + \sum_{i=1}^K \partial_{v_i}\left( m_i f^{(K)} \right) = 0, \\
  &m_i = m_i(t,x_1,v_1,\dots,x_K,v_K):= \mathbb E_\xi \left[ \epsilon \Delta_x u(t,x_i,\xi)|u(t,x_j,\xi) = v_j \text{ for all } j \right],
\end{aligned}
\right.
\end{equation}
which has to be completed by proper initial/boundary data. It is linear and integro-differential in $f^{(K)}$, featuring a spatially nonlocal convective term, an unclosed drift term in $v_i$-space through $m_i$, but no explicit source terms. Since $m_i$ depends on the joint distribution of $(u_1,\dots,u_K,\Delta_{x_i} u)$ by the definition of the conditional expectation, it cannot be expressed solely in terms of $f^{(K)}$, leaving (\ref{eq:fN}) unclosed.

Now define the $K$-point joint CDF $F^{(K)} = F^{(K)}(t,x_1,v_1,\dots,x_K,v_K)$ of $u$ as
\[
  F^{(K)} = \text{Prob}(\{u(\cdot,x_i)\le v_i \text{ for all } i\}) = 
  \int_{\prod(-\infty,v_i]} f^{(K)}(t,x_1,\tilde v_1,\dots,x_K,\tilde v_K) \prod d\tilde v_i.% \cdots d\tilde v_K. \int_{-\infty}^{v_1} \cdots \int_{-\infty}^{v_K}
\]
Clearly, $f^{(K)}=\partial_{v_1\cdots v_K}^K F^{(K)}$, and $F^{(K)}$ is non-decreasing in each $v_i$. Integrating (\ref{eq:fN}) over $v_1,\dots,v_K$ yields the governing equation for $F^{(K)}$:
\begin{equation} \label{eq:FN_eq}
\begin{split}
  & \partial_tF^{(K)} + \sum_{i=1}^K g'(v_i) \cdot \nabla_{x_i} F^{(K)} 
  \\ 
  &+ \sum_{i=1}^K \int_{\prod_{1\le j\le K, \, j\ne i}(-\infty,v_j]} \left( m_i f^{(K)} \right)(\cdot,\tilde v_{i-1},v_i,\tilde v_{i+1}) \prod\nolimits_{1\le j\le K,\, j\ne i} d\tilde v_j = 0, %\int^{v_1}\cdots \int^{v_{i-1}} \int^{v_{i+1}}\cdots \int^{v_K}
\end{split}
\end{equation}
where $f^{(K)} = \partial^K_{\tilde v_1 \cdots \tilde v_{i-1} v_i \tilde v_{i+1}\cdots \tilde v_K} F^{(K)}$, and $m_i$ is as in (\ref{eq:fN}). Compared to (\ref{eq:fN}), the spatial term in (\ref{eq:FN_eq}) is local, but the unclosed terms are more involved for $K\ge 2$.

For the single-point case $K=1$, write $f=f(t,x,v)=f^{(1)}$ and $F=F(t,x,v)=F^{(1)}$. Then (\ref{eq:fN}) and (\ref{eq:FN_eq}) reduce to
\begin{align}
  &\left\{
  \begin{aligned}
    &\partial_t f + \nabla_x\cdot \partial_v\left( g'(v) \int_{-\infty}^v f(t,x,\tilde v)\,d\tilde v \right) + \partial_v \left( m(t,x,v) f \right) = 0, \\
    &m(t,x,v):=\mathbb E_\xi \left[ \epsilon \Delta_x u | u=v \right].
  \end{aligned}
  \right. \label{eq:f_eq}
  \\
  &\left\{
  \begin{aligned}
    &\partial_t F + g'(v)\cdot \nabla_x F + m(t,x,v) \partial_v F = 0, \\
    &F(0,x,v) = G(x,v) := \text{Prob}(\{u_0(x,\xi)\le v\}) = \int_{\{z|u_0(x,z)\le v\}} p_\xi(z)\,dz.
  \end{aligned}
  \right. \label{eq:F_eq}
\end{align}
Eq. (\ref{eq:F_eq}) is a linear transport equation, which greatly facilitates the subsequent analysis. In the sequel we always assume that the solution to (\ref{eq:F_eq}) exists uniquely.

\begin{remark}
  In analogy with the LMN hierarchy for the incompressible NSE, the statistical conservation laws for scalar problems, (\ref{eq:fN}) and (\ref{eq:FN_eq}), can also be formulated as an infinite hierarchy of evolution equations for $f^{(1)}, f^{(2)}, \dots$. These results are presented in a separate work \cite{huang_hyp_proc}.
\end{remark}

\begin{remark}
  While our main focus is on the random balance laws (\ref{eq:scalar}), the framework of statistical conservation laws also applies to stochastic PDEs of the form $du + \nabla_x \cdot g(u)\,dt = \epsilon \Delta_x u\,dt + \sigma(x)\,dW_t$, where $W_t$ is a Brownian motion with real-valued diffusion coefficient $\sigma = \sigma(x)$. In this setting, the single-point CDF $F(t,x,v)$ is governed by a Fokker-Planck equation:
  \begin{equation} \label{eq:F_eq_st}
    \partial_t F + g'(v)\cdot \nabla_x F + m(t,x,v)\partial_v F = \tfrac{1}{2}\sigma(x)^2 \partial_v^2 F.
  \end{equation}
  See a rigorous derivation of (\ref{eq:F_eq_st}) in \cite{E2000}. We will return to (\ref{eq:F_eq_st}) in Remark \ref{rem:pos_st}.
\end{remark}

\begin{remark}
  For completeness, we record the statistical conservation laws for (\ref{eq:scalar}) with a source term $Q$ independent of $\Delta_x u$. First, if $Q=Q(u)$ and $u$ admits a local smooth solution, then $f=f(t,x,v)$ and $F=F(t,x,v)$ satisfy \emph{closed} equations:% \cite{Ven2012}:
  \[
  \begin{aligned} 
    &\partial_t f + \nabla_x\cdot \partial_v\left( g'(v) \int_{-\infty}^v f(t,x,\tilde v) \, d\tilde v \right) + \partial_v(Q(v)f) = 0, \\
    &\partial_t F + g'(v)\cdot \nabla_x F + Q(v)\partial_v F = 0,
  \end{aligned}
  \]
  which have the same structures as (\ref{eq:f_eq}) and (\ref{eq:F_eq}) with $m(t,x,v)$ replaced by $Q(v)$.

  Next, if $Q=Q(u,\nabla_x u)$ and $u$ is smooth locally, then one obtains a closed equation for the joint PDF $\mathfrak p = \mathfrak p(t,x,v,w) := \mathbb E_\xi \left[ \delta(u(t,x,\xi)-v) \cdot \delta(\nabla_x u(t,x,\xi)-w) \right]$ of $u$ and its gradient ($v\in\mathbb R$, $w\in\mathbb R^d$). For notational simplicity, following \cite{Ven2012}, set $\mathcal N(u,\nabla_x u) := \nabla_x\cdot g(u) - Q(u,\nabla_x u)$ so that $\partial_t u + \mathcal N(u,\nabla_x u) = 0$. Then $\mathfrak p$ satisfies
  \[
  \left\{
  \begin{aligned}  
    &\partial_t \mathfrak p = \partial_v \left( \mathcal N(v,w) \mathfrak p \right) + \nabla_w \cdot \left( \diffp{\mathcal N}{u}(v,w)w \mathfrak p \right) + \nabla_w \cdot \left( \diffp{\mathcal N}{\nabla_x u}(v,w) \mathcal A(t,x,v,w) \right), \\
    &\nabla_w \cdot \mathcal A(t,x,v,w) = -\nabla_x \mathfrak p - w \partial_v \mathfrak p,
  \end{aligned}
  \right.
  \]
  with $\mathcal A$ representing $\mathbb E_\xi \left[ (\nabla_x\nabla_x u)\cdot\delta(u-v)\cdot\delta(\nabla_x u-w) \right] \in \mathbb R^{d\times d}$. While this system appears underdetermined for $d\ge 2$, it closes in one dimension. Indeed, for $d=1$, the second equation reduces to $\partial_w \mathcal A = -\partial_x \mathfrak p - w \partial_v \mathfrak p$, which gives $\mathcal A = - \int_{-\infty}^w (\partial_x \mathfrak p + \tilde w\partial_v \mathfrak p)(\cdot,v,\tilde w)\, d\tilde w$. Substituting this $\mathcal A$ into the first equation produces a closed system for $\mathfrak p$ in 1D \cite{Ven2012}.
\end{remark}

The remainder of this section is devoted to the formal derivation of (\ref{eq:fN}) and (\ref{eq:f_eq}). The approach of \cite{Ven2012} is outlined in Section \ref{subsec:formal}. %The extension to multiple points is straightforward. 
An alternative, more intuitive derivation of (\ref{eq:F_eq}) is given in Section \ref{subsec:int}.

\subsection{Formal derivation of (\ref{eq:fN}) and (\ref{eq:f_eq})} \label{subsec:formal}

We first record two relations that will be repeatedly used (here $u_i:=u(t,x_i,\xi)$):
\begin{align}
  &\partial_\eta f^{(K)} = - \sum\nolimits_{k=1}^K \partial_{v_k} \mathbb E_\xi \left[ \partial_\eta u_k \prod\nolimits_{i=1}^K \delta(u_i-v_i) \right], \qquad\qquad\qquad \eta=t,x, \label{eq:chain} \\
  &\mathbb E_\xi \left[ u_k \prod\nolimits_{i=1}^K \delta(u_i-v_i) \right] = v_k \mathbb E_\xi \left[ \prod\nolimits_{i=1}^K \delta(u_i-v_i) \right] = v_k f^{(K)}, \quad k=1,\dots,K. \label{eq:replace}
\end{align}
For clarity we verify these relations for $K=1$; the extension to $K>1$ is immediate.

\textbf{Validation of (\ref{eq:chain}).} For smooth compactly supported $\varphi(v)$ (here $u=u(t,x,\xi)$),
\[
  \int \varphi(v) \mathbb E_\xi [\delta(u-v)] \, dv = \mathbb E_\xi \left[ \int \varphi(v) \delta(u-v) \, dv \right] = \mathbb E_\xi \left[ \varphi(u) \right].
\]
Differentiating in $\eta$ and using $\varphi'(u)\partial_\eta u = \int \varphi'(v)\delta(u-v)dv \cdot \partial_\eta u$, we obtain
\[
\begin{aligned}
  \int \varphi(v) &\partial_\eta \mathbb E_\xi [\delta(u-v)] \, dv = \mathbb E_\xi \left[ \varphi'(u)\partial_\eta u \right]
  = \mathbb E_\xi \left[ \int \varphi'(v)\delta(u-v) \, dv \cdot \partial_\eta u \right] \\
  &= \int \varphi'(v) \mathbb E_\xi \left[ \partial_\eta u \cdot \delta(u-v) \right] \, dv
  =-\int \varphi(v) \partial_v \mathbb E_\xi \left[ \partial_\eta u \cdot \delta(u-v) \right] \, dv.
\end{aligned}
\]
Since $\varphi$ is arbitrary, (\ref{eq:chain}) follows (with $f=\mathbb E_\xi[\delta(u-v)]$ by (\ref{eq:f_def})).

\textbf{Validation of (\ref{eq:replace}).} The desired identity is derived as follows.
\[
\begin{aligned}
  \int \varphi(v) \mathbb E_\xi \left[ u \delta(u-v) \right] \, dv &= \mathbb E_\xi \left[ \int \varphi(v) u \delta(u-v) \, dv \right]
  = \mathbb E_\xi \left[ \varphi(u)u \right] \\
  &= \mathbb E_\xi \left[ \int \varphi (v)v \delta(u-v) \, dv \right] = \int \varphi(v) v \mathbb E_\xi \left[ \delta(u-v) \right] \, dv.
\end{aligned}
\]

We now derive (\ref{eq:f_eq}); the extension to the multi-point case (\ref{eq:fN}) is straightforward. It is seen from (\ref{eq:chain}) and (\ref{eq:scalar}) that
\begin{equation} \label{eq:feq_mid}
\begin{split}
  \partial_t f &= -\partial_v \mathbb E_\xi \left[ \partial_t u \cdot \delta(u-v) \right] = \partial_v \mathbb E_\xi \left[ \nabla_x\cdot g(u) \delta(u-v) \right] - \partial_v \mathbb E_\xi \left[ \epsilon\Delta_x u \cdot \delta(u-v) \right] \\
  &= \partial_v \left( g'(v) \cdot \mathbb E_\xi \left[ \nabla_x u \cdot\delta(u-v) \right] \right) - \partial_v \mathbb E_\xi \left[ \epsilon\Delta_x u \cdot \delta(u-v) \right]
\end{split}
\end{equation}
and
\[
  \nabla_x f = -\partial_v \mathbb E_\xi \left[ \nabla_x u \cdot \delta(u-v) \right].
\]
The last expression may lead to
\begin{equation} \label{eq:close_trans}
  \mathbb E_\xi \left[ \nabla_x u \cdot \delta(u-v) \right] = -\int_{-\infty}^v \nabla_x f(\cdot,\tilde v) \, d\tilde v.
\end{equation}
Substituting (\ref{eq:close_trans}) into (\ref{eq:feq_mid}) gives the $f$-equation (\ref{eq:f_eq}), provided one uses the standard relation (see \cite{Popeturbulence,friedrich2012,vond2019})
\[
  \mathbb E_\xi \left[\epsilon \Delta_x u \cdot \delta(u-v)\right] = \mathbb E_\xi \left[\epsilon \Delta_x u | u=v \right] f(\cdot,v).
\]

\begin{remark} \label{rem:inviscid}
The above derivation is rigorous for viscous scalar balance laws with $\epsilon>0$. In the inviscid case $\epsilon=0$, (\ref{eq:feq_mid}) becomes $\partial_t f = \partial_v \left( g'(v) \cdot \mathbb E_\xi \left[ \nabla_x u \cdot \delta(u-v) \right] \right)$, which, after applying (\ref{eq:close_trans}), leads formally to conservation laws of the same form as (\ref{eq:f_eq}) and (\ref{eq:F_eq}) with the viscous term absent. These equations are valid as long as the solution $u$ remains smooth, but fail to hold once shocks form (see more discussions in Section~\ref{sec:pos}). The reason is that after shock formation, the quantity $\mathbb E_\xi[\nabla_x u ,\delta(u-v)]$ may become discontinuous in $v$, rendering (\ref{eq:close_trans}) invalid. This breakdown will be illustrated by Example~\ref{ex:nonpos}.
\end{remark}

\subsection{An intuitive derivation of (\ref{eq:F_eq})} \label{subsec:int}

We now present an intuitive derivation of (\ref{eq:F_eq}). Recall that $p(\xi)$ is the PDF of $\xi$. Start from the definition of the single-point CDF:
\[
  F(t,x,v) = \text{Prob}(\{u(t,x,\xi)\le v\}) = \int_{\{\xi: \ u(t,x,\xi)\le v \}} p(\xi)\,d\xi.
\]
Consider the map $\Xi_{t,x}: \xi \mapsto u(t,x,\xi) = v$ parameterized by $(t,x)$, and let $\Xi_{t,x}^{-1}(v)$ denote the set of preimages of $v$. Then $F(v)-F(v-) = \int_{\Xi_{t,x}^{-1}(v)} p(\xi)\, d\xi$. Since $F$ is non-decreasing, it is continuous in $v$ for almost every $v$. Consequently, for a.e. $v$, either $m(\Xi_{t,x}^{-1}(v))=0$ (here $m$ is the Lebesgue measure), or $p(\xi)=0$ for a.e. $\xi \in \Xi_{t,x}^{-1}(v)$. Without loss of generality, we assume $m(\Xi_{t,x}^{-1}(v))=0$ and $p(\xi) > 0$ on $\Xi_{t,x}^{-1}(v)$. It then follows that
\[
  f(v) = \partial_v F = \sum_{\xi\in \Xi_{t,x}^{-1}(v)} p(\xi) |\partial_\xi u|^{-1}.
\]
If $u(\xi) \in C^1$, then for each $\xi = \xi(t,x,v) \in \Xi_{t,x}^{-1}(v)$, the implicit function theorem gives $(\partial_\xi u) (\partial_t\xi) = -\partial_t u$, $(\partial_\xi u) (\nabla_x\xi) = -\nabla_x u$, and $(\partial_\xi u) (\partial_u\xi) = 1$. Hence, for a.e. $v$,
\[
\begin{aligned}
  \partial_t F &= \sum_{\xi\in\Xi_{t,x}^{-1}(v)} p(\xi) (\partial_t \xi) \cdot \text{sgn}(\partial_\xi u) = -\sum_{\xi\in\Xi_{t,x}^{-1}(v)} (\partial_tu) p(\xi) |\partial_\xi u|^{-1}, \\
  \nabla_x F &= \sum_{\xi\in\Xi_{t,x}^{-1}(v)} p(\xi) (\nabla_x \xi) \cdot \text{sgn}(\partial_\xi u) = -\sum_{\xi\in\Xi_{t,x}^{-1}(v)} (\nabla_x u) p(\xi) |\partial_\xi u|^{-1}.
\end{aligned}
\]
Applying the governing equation (\ref{eq:scalar}) for $u$, we obtain
\begin{equation} \label{eq:F_eq_int}
  -(\partial_t F + g'(u)\cdot \nabla_xF) = \epsilon \sum_{\xi\in\Xi_{t,x}^{-1}(v)} (\Delta_x u) p(\xi) |\partial_\xi u|^{-1}.
\end{equation}
To handle the right-hand side, consider the joint PDF of $\Delta_x u$ and $u$ (for $a,v\in\mathbb R$): $\mathfrak p(a,v) := \mathbb E_\xi \left[ \delta(\Delta_x u(t,x,\xi)-a) \cdot \delta(u(t,x,\xi)-v) \right] = \int \delta(\Delta_xu(t,x,\xi)-a) \cdot \delta(u(t,x,\xi)-v) p(\xi)\,d\xi$.
By the definition of conditional expectation and formal properties of the Dirac delta distribution, we have
\[
\begin{aligned}
  \epsilon f(\cdot,v) \mathbb E_\xi[\Delta_xu|u=v] &= \epsilon \int a \mathfrak p(a,v)\,da
  = \epsilon \int \Delta_xu(t,x,\xi)\cdot\delta(u(t,x,\xi)-v)p(\xi)\,d\xi \\
  &= \epsilon \sum_{\xi\in\Xi_{t,x}^{-1}(v)} \Delta_xu(t,x,\xi)p(\xi)|\partial_\xi u|^{-1}.
\end{aligned}
\]
Substituting this into the right-hand side of (\ref{eq:F_eq_int}) immediately recovers (\ref{eq:F_eq}).

\section{Positivity of PDFs} \label{sec:pos}

This section addresses an important observation regarding the unclosed term $m=m(t,x,v)=\epsilon \mathbb E_\xi \left[ \Delta_x u | u=v \right]$ in the statistical conservation laws (\ref{eq:f_eq}) and (\ref{eq:F_eq}). Although $m$ contains the factor $\epsilon$, we show that, for nonlinear fluxes $g(v)$,
\[ m(t,x,v) \nrightarrow 0 \quad \text{as} \quad \epsilon\to 0. \]
This non-vanishing behavior can be viewed as a scalar analogue of the so-called dissipative anomaly in turbulence, while the underlying mechanism here is closely related to the shocks formed in the inviscid conservation laws. The same phenomenon occurs for the multipoint terms $m_i$ in (\ref{eq:fN}) and (\ref{eq:FN_eq}): in general, at least one $m_i$ does not vanish in the inviscid limit $\epsilon \to 0$. The reason is that these terms are essential for preserving the positivity of PDFs, or equivalently, the monotonicity of CDFs $F^{(K)}(\cdot,v_i)$ with respect to $v_i$ ($\forall i=1,\dots,K$). We illustrate this `by contradiction': the positivity of $f(v)$ is not preserved \emph{globally} and \emph{unconditionally} by the following equation with the unclosed term dropped:
\begin{equation} \label{eq:F_eq_noeps}
\begin{split}
  \partial_t F_{S0} + g'(v)\cdot \nabla_x F_{S0} &= 0, \\ 
  F_{S0}(0,x,v) &= G(x,v),
\end{split}
\end{equation}
where $G(x,v)$ has sufficient regularity and is non-decreasing in $v$ for all $(x,v)$. Then we have

\begin{theorem} \label{thm:nonpos}
  The solution $F_{S0}(t,x,v)$ to (\ref{eq:F_eq_noeps}) is non-decreasing in $v$ for all $(t,x,v)$ if and only if $g''(v)\cdot \nabla_x G \le 0$ for all $(x,v)$.
\end{theorem}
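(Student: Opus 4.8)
The plan is to solve (\ref{eq:F_eq_noeps}) in closed form by the method of characteristics and then simply differentiate in $v$. For each fixed $v$, equation (\ref{eq:F_eq_noeps}) is a linear transport equation in $(t,x)$ with the \emph{constant} velocity $g'(v)\in\mathbb R^d$, so its unique solution is
\[
  F_{S0}(t,x,v) = G\bigl(x - g'(v)\,t,\; v\bigr),
\]
as one checks directly against both the PDE and the initial datum. This explicit representation is the key to the whole argument: everything reduces to an elementary computation of $\partial_v F_{S0}$.

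The second step is that differentiation. Writing $y := x - g'(v)\,t$ and applying the chain rule (using the assumed regularity of $G$, and $\partial_v g'(v)=g''(v)$), I obtain
\[
  \partial_v F_{S0}(t,x,v) = \partial_v G(y,v) \;-\; t\,g''(v)\cdot \nabla_x G(y,v).
\]
Since $G$ is non-decreasing in $v$ we have $\partial_v G\ge 0$, so the sign of $\partial_v F_{S0}$ is controlled entirely by the second, viscosity-free transport term. I would also note the simple but essential observation that for fixed $(t,v)$, as $x$ ranges over $\mathbb R^d$ so does $y=x-g'(v)t$; hence a pointwise hypothesis imposed for all $(x,v)$ is equivalent to the same hypothesis for all $(y,v)$.

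From here both implications are short. For sufficiency ($\Leftarrow$): if $g''(v)\cdot\nabla_x G\le 0$ everywhere, then since $t\ge 0$ the subtracted term is nonpositive, and combined with $\partial_v G\ge 0$ this gives $\partial_v F_{S0}\ge 0$ for all $(t,x,v)$. For necessity ($\Rightarrow$) I argue by contradiction: if $g''(v_0)\cdot\nabla_x G(y_0,v_0)>0$ at some $(y_0,v_0)$, choose $x_0 = y_0 + g'(v_0)\,t$ so that $y=y_0$, and let $t$ grow; then $-t\,g''(v_0)\cdot\nabla_x G(y_0,v_0)\to-\infty$, forcing $\partial_v F_{S0}(t,x_0,v_0)<0$ for $t$ large and contradicting monotonicity in $v$. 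The computation is entirely routine, and I anticipate no genuine obstacle; the only points needing a little care are the vector-valued chain rule through the flux derivative and, in the necessity direction, confirming that the prescribed base point $y_0$ is attained as $y=x-g'(v)t$ at every time by a suitable free choice of $x$, which is immediate.
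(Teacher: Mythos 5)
Your proposal is correct and follows essentially the same route as the paper: the explicit characteristic solution $F_{S0}(t,x,v)=G(x-g'(v)t,v)$, differentiation in $v$, and for necessity the choice $x=x_0+g'(v_0)t$ with $t$ large enough that the term $-t\,g''(v_0)\cdot\nabla_x G$ overwhelms $\partial_v G\ge 0$. No gaps.
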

\begin{proof}
  By the method of characteristics, the solution is $F_{S0}(t,x,v) = G(x-g'(v)t, v)$, which gives
  \[
    \partial_v F_{S0}(t,x,v) = -tg''(v) \cdot \nabla_x G(x-g'(v)t,v) + \partial_vG(x-g'(v)t,v).
  \]
  Since $\partial_v G \ge 0$, the monotonicity $\partial_v F_{S0} \ge 0$ holds if $g''(v)\cdot \nabla_x G \le 0$ for all $(x,v)$. Conversely, if $g''(v)\cdot \nabla_x G(x_0,v_0) > 0$ for some $(x_0,v_0)$, then for sufficiently large $t>\partial_vG(x_0,v_0)/(g''(v_0)\cdot\nabla_x G(x_0,v_0))$, one can find $x=x_0+g'(v_0)t$ such that $\partial_v F_{S0}(t,x,v_0)<0$.
\end{proof}

\begin{corollary} \label{cor:nonpos}
  For $d=1$ and a convex flux $g(v)$ with $g''(v) \neq 0$ for all $v$, $F_{S0}(t,x,v)$ is non-decreasing in $v$ for all $(t,x,v)$ if and only if
  \[\left\{
  \begin{aligned}
    &\partial_x G \le 0 \text{ for all } (x,v), \quad \text{if } g''(v)>0, \\
    &\partial_x G \ge 0 \text{ for all } (x,v), \quad \text{if } g''(v)<0.
  \end{aligned}
   \right.\]
\end{corollary}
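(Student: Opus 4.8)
The plan is to obtain the corollary as a direct specialization of Theorem~\ref{thm:nonpos} to the one-dimensional setting, in which $\nabla_x G = \partial_x G$ is scalar. The theorem already provides the sharp criterion: $F_{S0}$ is non-decreasing in $v$ for all $(t,x,v)$ if and only if $g''(v)\,\partial_x G(x,v)\le 0$ for all $(x,v)$. Everything therefore reduces to rewriting this single product inequality as a condition on the sign of $\partial_x G$ alone, using the hypothesis that $g''$ never vanishes.

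The first step is to note that, since $g$ is smooth and $g''(v)\ne 0$ for every $v$, the function $g''$ is continuous and nowhere zero, and hence has constant sign on all of $\mathbb R$ by the intermediate value theorem. Thus exactly one of the regimes $g''>0$ (strictly convex) or $g''<0$ (strictly concave) holds everywhere; the stated ``convex'' hypothesis is read in this nonvanishing sense, which is why the conclusion splits into two branches. Next I would fix an arbitrary point $(x,v)$ and divide the criterion $g''(v)\,\partial_x G(x,v)\le 0$ by the nonzero scalar $g''(v)$. In the convex case this preserves the inequality and gives $\partial_x G(x,v)\le 0$, whereas in the concave case it reverses the inequality and gives $\partial_x G(x,v)\ge 0$. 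Because $(x,v)$ was arbitrary and the sign of $g''$ is uniform in $v$, these pointwise equivalences assemble into the claimed global conditions, and conversely each global condition plainly implies $g''(v)\,\partial_x G\le 0$ everywhere, so Theorem~\ref{thm:nonpos} applies in both directions.

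I do not expect a substantive obstacle; the only point requiring genuine care is the constant-sign observation. Without it one could not factor the joint condition $g''(v)\,\partial_x G\le 0$ into a single uniform sign condition on $\partial_x G$, since a $g''$ that changed sign would tie the admissible sign of $\partial_x G$ to the value of $v$. The continuity-plus-nonvanishing argument is precisely what decouples the two factors and makes the equivalence clean, and the inequality reversal under division by a negative $g''$ is the sole reason the concave branch reads $\partial_x G\ge 0$ rather than $\partial_x G\le 0$.
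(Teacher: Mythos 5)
Your proposal is correct and matches the paper's intent: the corollary is stated without a separate proof precisely because it is the immediate specialization of Theorem~\ref{thm:nonpos} to $d=1$, obtained by dividing the criterion $g''(v)\,\partial_x G\le 0$ by the nonvanishing scalar $g''(v)$. Your additional observation that continuity plus nonvanishing forces $g''$ to have constant sign is accurate and cleanly justifies the two-branch formulation.
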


Theorem~\ref{thm:nonpos} and Corollary~\ref{cor:nonpos} imply that, for nonlinear $g(v)$, there always exist physically admissible initial CDFs $G(x,v)$ such that the solution to (\ref{eq:F_eq_noeps}) loses monotonicity in $v$, rendering it non-physical. Hence, (\ref{eq:F_eq_noeps}) does not constitute the correct statistical conservation law in the inviscid limit. We give a 1D example to illustrate this point.

\begin{example} \label{ex:nonpos}
  Consider the 1D scalar Burgers equation $\partial_t u + \partial_x(\frac{u^2}{2}) = 0$ on $D=[0,2\pi]$ with periodic boundary conditions and initial data $u_0(x,\xi) = \sin x + \xi$, where $\xi$ is a zero-mean Gaussian variable with standard deviation $\sigma$. The initial CDF is
  \begin{equation} \label{eq:G1}
    G(x, v) = \Phi\left(\frac{v-\sin x}{\sigma}\right),
  \end{equation}
  where $\Phi(x) = \frac{1}{\sqrt{2\pi}}\int^x \exp(-\frac{s^2}{2})\,ds = \frac{1}{2}\text{erfc}(-\frac{x}{\sqrt{2}})$ is the standard normal CDF. Here, $\partial_x G$ can be positive, generating negative $\partial_v F_{S0}$ according to Corollary~\ref{cor:nonpos}. Indeed,
  \begin{subequations} \label{eq:FS0_case1}
  \begin{align}
  &F_{S0}(t,x,v) = G(x-vt,v) = \Phi\left( \frac{v-\sin(x-vt)}{\sigma} \right), \\
  &f_{S0}(t,x,v):=\partial_v F_{S0} = \frac{1}{\sqrt{2\pi}\sigma}\exp \left( -\frac{(v-\sin(x-vt))^2}{2\sigma^2} \right) \left( 1+t\cos(x-vt) \right).
  \end{align}
  \end{subequations}
  Given any $x$, $f_{S0}$ starts to take negative values at $t=1$, coinciding with shock formation in the underlying Burgers equation.

  We continue the discussion from Remark~\ref{rem:inviscid} to show that, in this example, $\mathbb E_\xi \left[ (\partial_x u) \delta(u-v) \right]$ becomes discontinuous in $v$ for $t>1$, thereby invalidating (\ref{eq:close_trans}). 
  For $t>1$, the (unique entropic weak) solution $u(t,x,0)$ develops a shock at $x=\pi$, with $u(t,\pi^-,0) = v_s > 0$, $u(t,\pi^+,0) = -v_s$, and the product $|\partial_xu(t,\pi^-,0)\,\partial_x u(t,\pi^+,0)|$ is bounded away from zero. For other values of $\xi$, the shock in $u(t,x,\xi)$ is shifted to the position $\pi+\xi t \pmod{2\pi}$ according to (\ref{eq:uburgers}). Moreover, we will show in Theorem~\ref{thm:monotonicity} that the (classical) solution $u(t,\pi,\xi)$ is strictly increasing in $\xi$ for any $\epsilon>0$. Hence, in the inviscid limit $\epsilon\to 0$, the monotonicity persists for the entropic weak solution, implying $u(t,\pi,0^-) = -v_s$ and $u(t,\pi,0^+) = v_s$. Consequently,
  \[
    \mathbb E_\xi\!\left[ (\partial_x u) \delta(u-v) \right](t,\pi,v) = \int \partial_x u(t,\pi,z)\,\delta(u(t,\pi,z)-v)\,p(z)\,dz
  \]
  vanishes for $-v_s < v < v_s$ but remains bounded away from zero as $v \nearrow -v_s$ or $v \searrow v_s$. Thus, this quantity is discontinuous at $(t>1, x=\pi, v=\pm v_s)$ in the inviscid limit, which renders (\ref{eq:close_trans}) invalid. Intuitively, a `correction term' must be incorporated into (\ref{eq:close_trans}) in this limit. This correction is characterized by the nonzero contribution of the $m$-containing term in the statistical conservation laws.

  This example will be further analyzed in Section~\ref{subsec:case_1} and verified numerically in Section~\ref{subsec:test_1} (referred to as Case~\textbf{I}).
\end{example}

\begin{remark} \label{rem:pos_st}
  For stochastic scalar balance laws, the `zero-$\epsilon$' limit of (\ref{eq:F_eq_st}), $\partial_t F + g'(v)\cdot \nabla_x F = \tfrac{1}{2}\sigma^2 \partial_v^2 F$ with $F(0,t,v) = G(x,v)$, does not globally and unconditionally preserve the positivity of $f(v)$. Indeed, differentiating with respect to $v$ gives the governing equation for $f=\partial_v F$:
  \[
    \partial_t f + g'(v)\cdot \nabla_x f + g''(v) \cdot \nabla_x F = \tfrac{1}{2}\sigma^2 \partial_v^2 f, \quad 
    f(0,x,v) = \partial_v G(x,v).
  \]
  Take an initial condition $\partial_v G(x,v)$ such that $\partial_v G(x_0,v_0)=0$ at a local minimum, and assume $g''(v)\cdot\nabla_x G > \frac{1}{2}\sigma^2 \partial_v^2(\partial_v G)$ at $(x_0,v_0)$. Clearly, such a $G$ can be constructed. Then, for $t>0$, the solution $f(t,x,v)$ becomes negative in a neighborhood of $(x_0,v_0)$. We note that, for $g(v)=v^2/2$, an exact statistical conservation law in the inviscid limit was obtained in \cite{E2000}.
\end{remark}

The above discussion highlights that the unclosed term $m(t,x,v)$ in the statistical conservation laws plays a crucial role: it automatically ensures the positivity of $f(\cdot,v)$ for all $(t,x,v)$. In practice, however, $m(t,x,v)$ is unknown and can only be approximated via closure models or numerical sampling. It would therefore be desirable to develop a `monotonicity-preserving' criterion for the reconstructed $m(t,x,v)$ in (\ref{eq:F_eq}). Generally, the available condition is highly implicit, derived through the method of characteristics, as presented below. This analysis provides insights into several special cases, which we summarize as follows.

\begin{proposition} \label{prop:special}
  Consider the linear transport equation (\ref{eq:F_eq}) with unknown $m(t,x,v)$ and a physically admissible initial CDF $G(x,v)$. Suppose that $m(t,x,v)$ is sufficiently regular to ensure well-posedness of the characteristic system associated with (\ref{eq:F_eq}). Then:

  (i). There exist infinitely many choices of $m(t,x,v)$ such that $F(\cdot,v)$ remains a CDF for all $(t,x)$.

  (ii). In the spatially homogeneous case (that (\ref{eq:F_eq}) is independent of $x$), if $m(t,v)$ satisfies $|m(t,v)| \le C(t)(1+|v|)$ and $C(t)\in L^1([0,\infty))$, then $F(\cdot,v)$ remains a CDF for all $t>0$.
  
  (iii). If $m=m(t)$ (no dependence on $x$ or $v$) and $g''>0$ (convex laws), then there exist initial CDFs $G(x,v)$ such that $F(\cdot,v)$ becomes decreasing in $v$. This indicates that $m$ must depend on $x$ and/or $v$ in general. 
\end{proposition}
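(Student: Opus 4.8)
The plan is to treat all three parts through the method of characteristics for (\ref{eq:F_eq}). Parametrizing a characteristic by $s$, it solves $\dot X=g'(V)$ and $\dot V=m(s,X,V)$, with $F$ constant along it; hence $F(t,x,v)=G(X_0,V_0)$, where $(X_0,V_0)$ is the foot at $s=0$ of the characteristic through $(t,x,v)$. In each case what must be checked is that $F(\cdot,v)$ is non-decreasing in $v$ and carries the correct limits as $v\to\pm\infty$.

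For (i) I would reverse-engineer $m$ from a prescribed family of admissible solutions. Assuming the regular $G$ is strictly increasing in $v$ (the generic admissible case, as in Example~\ref{ex:nonpos}), fix any smooth one-parameter family of increasing bijections $\phi_\lambda(t,\cdot):\mathbb R\to\mathbb R$ with $\phi_\lambda(0,v)=v$ and $\partial_v\phi_\lambda>0$, and set $F_\lambda(t,x,v):=G(x,\phi_\lambda(t,v))$. Each $F_\lambda$ is a genuine CDF in $v$ for every $(t,x)$ (monotonicity from $\partial_vF_\lambda=(\partial_vG)\,\partial_v\phi_\lambda\ge0$, the limits from surjectivity of $\phi_\lambda(t,\cdot)$), and it reduces to $G$ at $t=0$. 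Defining $m_\lambda:=-(\partial_tF_\lambda+g'(v)\cdot\nabla_xF_\lambda)/\partial_vF_\lambda$ then makes $F_\lambda$ an exact solution of (\ref{eq:F_eq}); the denominator is positive, so $m_\lambda$ is regular, and distinct $\lambda$ yield distinct $m_\lambda$. The only delicate point is well-definedness of $m_\lambda$ where $\partial_vG$ vanishes, which the strict-monotonicity assumption removes.

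For (ii) the homogeneous setting means $m=m(t,v)$, $G=G(v)$, and we seek $F=F(t,v)$, so $\nabla_xF=0$ and the PDE reduces to $\partial_tF+m(t,v)\partial_vF=0$. The characteristics then obey the scalar ODE $\dot V=m(t,V)$, $V(0)=v_0$, and $F(t,v)=G\big(V(t;\cdot)^{-1}(v)\big)$. The bound $|m(t,v)|\le C(t)(1+|v|)$ with $C\in L^1([0,\infty))$ is exactly what forces the flow $v_0\mapsto V(t;v_0)$ to be a global increasing bijection of $\mathbb R$ for every $t$: a two-sided Grönwall estimate gives $(1+|v_0|)e^{-\int_0^tC}-1\le|V(t;v_0)|\le(1+|v_0|)e^{\int_0^tC}-1$, so no trajectory explodes in finite time while $|V(t;v_0)|\to\infty$ as $|v_0|\to\infty$; uniqueness forbids crossing, so $V(t;\cdot)$ is strictly increasing and proper, hence onto $\mathbb R$. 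Composing its increasing inverse with the non-decreasing $G$ keeps $F(t,\cdot)$ non-decreasing, and surjectivity transports the limits $G(\pm\infty)=0,1$, so $F(\cdot,v)$ remains a CDF for all $t$.

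For (iii), $m=m(t)$ integrates explicitly: with $M(t)=\int_0^tm$, the foot is $V_0=v-M(t)$ and $X_0=x-\int_0^tg'(v-M(t)+M(s))\,ds$, whence $\partial_vF=\partial_vG-\nabla_xG\cdot\int_0^tg''(v-M(t)+M(s))\,ds$. Since $g''>0$ the integral is strictly positive and unbounded in $t$; choosing an admissible $G$ (say in $d=1$) with $\partial_xG>0$ at a point and following the characteristic to that foot, the second term dominates for large $t$ and forces $\partial_vF<0$. This is precisely the obstruction of Theorem~\ref{thm:nonpos} and Corollary~\ref{cor:nonpos}: a purely time-dependent $m$ only shifts $v$ rigidly and cannot counter the convex spreading, so $m$ must depend on $x$ and/or $v$. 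I expect the main obstacle to be part (ii), namely pinning down that the $L^1$-in-time, linear-in-$v$ growth of $m$ produces a flow that is simultaneously non-exploding and proper for all $t$, since both the upper and the lower Grönwall bounds, valid uniformly in $t$ thanks to $\int_0^\infty C<\infty$, are needed to preserve the CDF limits as well as monotonicity.
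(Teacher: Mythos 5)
Your treatment of parts (ii) and (iii) follows essentially the same characteristic-based route as the paper. For (ii), your no-crossing/uniqueness argument for monotonicity of the flow is a slightly more elementary substitute for the paper's explicit formula $\partial_v V(0)=\exp(-\int_0^t\partial_v m\,d\tau)>0$ (which requires $m$ differentiable in $v$), and your two-sided Gr\"onwall bounds for properness of the flow are the same as the paper's. For (i) you take a genuinely different route: you reverse-engineer $m_\lambda$ from a prescribed family of CDF-valued solutions $G(x,\phi_\lambda(t,v))$, whereas the paper simply observes that every viscosity $\epsilon>0$ in (\ref{eq:scalar}), with compatible initial data, generates one admissible $m$. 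Your construction is more explicit, but, as you yourself note, it needs $\partial_v G>0$ everywhere in order to divide by $\partial_v F_\lambda$, an assumption the proposition does not make; the paper's argument avoids this at the price of being non-constructive.

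The one genuine flaw is in (iii): the claim that $\int_0^t g''(V(\tau))\,d\tau$ is ``strictly positive and unbounded in $t$'' does not follow from $g''>0$ alone. With $V(\tau)=v-M(t)+M(\tau)$ the integral equals $\int_0^t g''(v-M(t)+M(s))\,ds$; taking $g''(z)=e^{-z^2}$ and $m\equiv 1$ gives $\int_{v-t}^{v}e^{-z^2}\,dz\le\sqrt{\pi}$, bounded uniformly in $t$. Consequently ``the second term dominates for large $t$'' can fail whenever $\partial_v G>0$ at the foot of the characteristic. The repair is exactly the paper's choice: for any fixed $t>0$ the integral is strictly positive, so it suffices to pick an admissible $G$ with $\partial_v G=0$ and $\partial_x G>0$ at the foot point $\psi(0)$; then (\ref{eq:realizability_Ct}) is violated and $\partial_v F<0$ with no large-$t$ asymptotics needed. (Your unboundedness claim does hold when $\inf_v g''>0$ or $m\in L^1([0,\infty))$, which is precisely the content of the remark following the proposition, but part (iii) assumes neither.)
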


The first statement is immediate: any $\epsilon>0$ in (\ref{eq:scalar}), together with a compatible initial datum consistent with $G(x,v)$, produces one valid $m(t,x,v)$. To analyze Statements (ii) and (iii), we examine the characteristic system of (\ref{eq:F_eq}) for $\psi(\tau;t,x,v) := (X(\tau;t,x,v), V(\tau;t,x,v))$, which satisfies
\begin{equation} \label{eq:charc_de}
  \frac{d}{d\tau} \psi = \left( g'(V(\tau)), \, m(\tau,\psi(\tau)) \right), \quad \psi|_{\tau=t} = (x, \, v).
\end{equation}
Thus,
\begin{equation} \label{eq:F_char_sol}
  F(t,x,v) = G(\psi(0;t,x,v)).
\end{equation}

The requirement $f(\cdot,v)\ge 0$ can be expressed as (omitting $(t,x,v)$-dependence):
\begin{equation} \label{eq:realizability}
  \partial_v F = \nabla_{(x,v)} G(\psi(0)) \cdot \partial_v \psi(0) \ge 0,
\end{equation}
for all $(t,x,v)$. Here, $\partial_v \psi(\tau)$ evolves according to
\[
  \frac{d}{d\tau}\partial_v \psi = \left( g''(V(\tau)) \partial_vV, \, \nabla_{(x,v)} m(\tau,\psi(\tau))\cdot \partial_v \psi(\tau) \right), \quad
  (\partial_v\psi)|_{\tau=t} = (\bm{0}, \, 1),
\]
with $\bm{0}\in\mathbb R^d$. This yields
\begin{equation} \label{eq:charc_v}
  \begin{bmatrix} \partial_vX(0) \\ \partial_vV(0) \end{bmatrix} = e^{-\int_0^t A(\tau) d\tau} \begin{bmatrix} \bm{0} \\ 1 \end{bmatrix} \text{ with } A(\tau) = \begin{bmatrix} \bm{0}_{d\times d} & g''(V(\tau)) \\ \nabla_x m(\tau,\psi(\tau))^T & \partial_vm(\tau,\psi(\tau)) \end{bmatrix}.
\end{equation}
Therefore, condition (\ref{eq:realizability}), together with (\ref{eq:charc_de}) and (\ref{eq:charc_v}), provides a highly implicit realizability criterion for $m(t,x,v)$ ensuring the positivity of $f(\cdot,v)$.

\begin{proof}[Proof of Proposition~\ref{prop:special}]
(ii). For the spatially homogeneous case with $F=F(t,v)$, condition~(\ref{eq:realizability}) reduces to
\[
  \partial_vF(t,v) = G'(V(0))\partial_vV(0) \ge 0.
\]
From (\ref{eq:charc_v}), we have
\[
  \partial_vV(0;t,v) = \exp\left(-\int_0^t \partial_vm(\tau,V(\tau;t,v))\,d\tau \right) > 0,
\]
which shows that the nonnegativity of $f(t,v)$ is preserved for all $(t,v)$.  
Since $F(t,v)=G(V(0;t,v))$, it remains to verify that $V(0;t,v)\to\pm\infty$ as $v\to\pm\infty$. Introduce $\tilde V(\tau) := V(t-\tau)$, so that $\tilde V(0)=v$, $\tilde V(t)=V(0)$, and
\[
  \frac{d}{d\tau}\tilde V(\tau) = -m(t-\tau,\tilde V(\tau)).
\]
Assume $v\gg 1$. Denoting by $M$ an upper bound of $\int_{\mathbb R_+}C(s)\,ds$, we obtain $\frac{d}{d\tau}\tilde V \ge -C(t-\tau)(1+\tilde V)$, which implies
\[
  \tilde V(t) \ge (v+1) \exp\left( -\int_0^t C(t-s)\,ds \right) - 1 \ge (v+1)e^{-M}-1.
\]
Thus $v\to\infty$ leads to $V(0)=\tilde V(t)\to\infty$ for any $t>0$.  
Similarly, for $v\ll -1$ we obtain $d\tilde V/d\tau \le C(t-\tau)(1-\tilde V)$ and hence $\tilde V(t) \le (v-1) e^{-M}+1$, which shows $V(0)=\tilde V(t)\to -\infty$ as $v\to -\infty$.

(iii). Consider the one-dimensional case ($x\in\mathbb R$). When $m=m(t)$, (\ref{eq:charc_v}) yields $\partial_v X(0) = -\int_0^t g''(V(\tau)) \, d\tau$, and $\partial_vV(0)=1$. Hence condition (\ref{eq:realizability}) becomes
\begin{equation} \label{eq:realizability_Ct}
  \partial_xG(\psi(0)) \int_0^t g''( V(\tau)) \, d\tau \le \partial_vG(\psi(0)).
\end{equation}
If $g''(v)>0$ for all $v$, condition (\ref{eq:realizability_Ct}) can be violated by choosing $\partial_v G(\psi(0))=0$ and $\partial_x G(\psi(0))>0$. Such $G(x,v)$ are clearly attainable as CDFs in $v$ for each $x$.
\end{proof}

\begin{remark}
  Regarding the special case $m=m(t)$, consider the CDF equation (\ref{eq:F_eq}) with $d=1$ (one-dimensional), $m=m(t)$, and a convex flux $g(v)$ with $g''(v)>0$ for all $v$. It is seen from (\ref{eq:realizability_Ct}) that the solution $F(t,x,v)$ preserves the monotonicity in $v$ for all $(t,x,v)$ if the initial condition $G(x,v)$ is non-increasing in $x$ for any $(x,v)$. This condition is also necessary if $\inf_v g''>0$ or $m\in L^1([0,\infty))$.
\end{remark}

\section{Numerical approximation of the unclosed term} \label{sec:approx}

We now proceed to approximate the unclosed term $m(t,x,v) = \epsilon\mathbb E_\xi[\Delta_xu|u]$ in (\ref{eq:F_eq}) by a data-driven estimator $\hat m(t,x,v)$ constructed from numerical samples. This straightforward procedure provides a unified framework for handling different types of statistical conservation laws (namely, to evaluate the PDFs/CDFs).

For (\ref{eq:F_eq}), the procedures are collected as in Algorithm~\ref{alg:scle}. The key step is to construct the sampling-based estimator $\hat m_N(t,x,v)$ in (\ref{eq:estimator}) for $m(t,x,v)$. Here, $K_{h_v}$ is a kernel with bandwidth $h_v>0$. A simple choice is the uniform kernel
\begin{equation} \label{eq:kerconst} 
    K_{h_v}(x) = \frac{1}{h_v}\mathbf 1_{|x|\le\frac{h_v}{2}}. 
\end{equation}
This means averaging $(\Delta_x u)_i$ over those samples $u_i$ that fall within distance $\tfrac{h_v}{2}$ of $v$. 

\begin{remark}
  In many cases, the estimator $\hat m_N$ can only be constructed on a spatio-temporal grid $\{(t^n,x_{\mathbf j})\}$ because the samples $u_i(t,x)$ and $(\Delta_x u)_i(t,x)$ are computed numerically. One has to interpolate $\hat m_N$ to off-grid points $(t,x,v)$ when solving (\ref{eq:charc_de_p}). Nevertheless, in this work, we avoid this issue by focusing on analytically solvable special cases (see Section~\ref{sec:cases}), so that $\hat m$ can be evaluated by (\ref{eq:estimator}) for all $(t,x,v)$.
\end{remark}

\begin{algorithm}[htbp]
\caption{Sampling-based approximation to the statistical conservation law (\ref{eq:F_eq})}
\label{alg:scle}
\begin{algorithmic}[1]
\STATE \textbf{Parameters:} Viscosity $\epsilon>0$ in (\ref{eq:scalar}), number of samples $N$, bandwidth $h_v>0$ for the estimator, and time step $\Delta t$.
\STATE \textbf{Input:} Initial condition $u_0(x,\xi)$, and PDF $p(\xi)$ of the random variable $\xi$ in (\ref{eq:scalar}), which thus determine the initial CDF $G(x,v)$ of (\ref{eq:F_eq}).
\STATE \textbf{Sampling:} Draw $N$ i.i.d.\ samples $\{\xi_i\}_{i=1}^N$ from $p(\xi)$.
\FOR{$i=1,\dots,N$}
  \STATE Solve the scalar balance law \eqref{eq:scalar} with initial data $u_0(x,\xi_i)$ to obtain
  \[
    u_i(t,x) := u(t,x,\xi_i), \quad (\Delta_x u)_i(t,x) := \Delta_x u(t,x,\xi_i).
  \]
\ENDFOR
\STATE \textbf{Estimator construction:} For each $(t,x,v)$, compute the Nadaraya-Watson-type estimator for the conditional expectation:
\begin{equation} \label{eq:estimator}
  \hat m_N(t,x,v) = 
  \begin{cases}
    \dfrac{\epsilon \sum_{i=1}^N (\Delta_x u)_i K_{h_v}(u_i-v)}
          {\sum_{i=1}^N K_{h_v}(u_i-v)}, & \text{if denominator $\ne 0$,}\\[1.2em]
    0, & \text{otherwise}.
  \end{cases}
\end{equation}
\STATE \textbf{Approximation of CDF:} Solve the approximating transport equation 
\begin{equation} \label{eq:Fappr}
    \hat F_t + g'(v)\cdot \nabla_x \hat F + \hat m_N(t,x,v)\hat F_v = 0, \quad
    \hat F(0,x,v) = G(x,v),
\end{equation}
via backward characteristic tracing for $\hat\psi(\tau; t,x,v) = (\hat X(\tau;t,x,v), \hat V(\tau;t,x,v))$. For each $(t,x,v)$, $\hat \psi$ satisfies
\begin{equation} \label{eq:charc_de_p}
  \frac{d}{d\tau} \hat\psi = \left( g'(\hat V(\tau)), \, \hat m_N(\tau,\hat \psi(\tau) \right), \quad \hat \psi|_{\tau=t} = (x, \, v).
\end{equation}
Use the first-order Euler method to compute $\hat\psi(0)$ with the time step $\Delta t$.
\STATE \textbf{Output:} Approximate CDF 
\begin{equation} \label{eq:Fhat_char_sol}
  \hat F(t,x,v)=G(\hat\psi(0)).
\end{equation}
\end{algorithmic}
\end{algorithm}

It is seen that, if $h_v$ is taken as a fixed value (i.e., independent of $N$), then by the law of large numbers, as $N \to \infty$, the estimator $\hat m_N(t,x,v)/\epsilon$ in (\ref{eq:estimator}) converges to
\[
  \frac{\int_{v-h_v/2}^{v+h_v/2} \int_{\mathbb R} a \mathfrak p_{\Delta_xu,u}(t,x,a,\tilde v)\,da\,d\tilde v}{\int_{v-h_v/2}^{v+h_v/2} f(t,x,\tilde v)\,d\tilde v} = \int_{v-h_v/2}^{v+h_v/2} \omega(t,x,\tilde v) \mathbb E_\xi[\Delta_xu|u](t,x,\tilde v)\,d\tilde v,
\]
where $\mathfrak p_{\Delta_xu,u}(t,x,a,v)$ denotes the joint PDF of $\Delta_xu(t,x) = a$ and $u(t,x)=v$, and $\omega(t,x,\tilde v) = f(t,x,\tilde v)/\int_{v-h_v/2}^{v+h_v/2} f(t,x,\tilde v)\,d\tilde v$. Clearly, this is the weighted average of $\mathbb E_\xi[\Delta_x u|u=\tilde v]$ over the interval $\tilde v\in[v-h_v/2,v+h_v/2]$ and in general it does not coincide with the desired quantity $m(v)/\epsilon=\mathbb E_\xi[\Delta_x u|u=v]$ whenever $h_v>0$ (it is therefore referred to as a `\emph{biased} estimator').

Therefore, in the limit $N \to \infty$, it is necessary that $h_v \to 0$ while simultaneously $Nh_v \to \infty$ (ensuring that the mean number of samples in the interval goes to infinity). It has been established that, for kernels such as (\ref{eq:kerconst}), if the `optimal' bandwidth is chosen as $h_v = C N^{-1/5}$, the mean integrated squared error (MISE) satisfies
\begin{equation} \label{eq:mise}
  \mathbb E_\xi\left[ \|\hat m_N - m\|^2_{L^2_v([v_{min},v_{max}])} \right] = C'N^{-4/5}  
\end{equation}
in the large $N$ limit \cite{Bierens_1994}. It should be emphasized that the coefficients $C, \, C'$ are usually \emph{not} known in prior as they are related to properties of $m(v)$.

We close this section with an analysis of the discrepancy between the approximated system (\ref{eq:Fappr}) and the original CDF equation (\ref{eq:F_eq}). In certain settings, the difference $\|F-\hat F\|$ caused by $\|m-\hat m\|$ can be quantified with an error estimate.

\begin{theorem} \label{thm:error}
  Assume that $g''$ is bounded with $|g''|\le C$. Consider both (\ref{eq:F_eq}) and (\ref{eq:Fappr}) on $(t,x,v)\in [0,T]\times D_{x,v} \subset \mathbb R_+\times \mathbb R\times \mathbb R$. Further assume that $\nabla_{x,v}(m,G)\in L^\infty([0,T]\times D_{x,v})$. Then we have
  \[
    \|(F-\hat F)(t,\cdot)\|_{L^\infty_{x,v}} \le e^{(C+\|\nabla_{x,v}m\|_{L^\infty})t} \|\nabla_{x,v}G\|_{L^\infty} \|m-\hat m_N\|_{L^\infty} t, \quad \forall \, t\le T.
  \]
\end{theorem}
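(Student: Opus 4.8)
The plan is to exploit the characteristic representations of both $F$ and $\hat F$ and reduce the $L^\infty$ difference to a Gronwall estimate for the separation of the two backward characteristic flows. Recall from (\ref{eq:F_char_sol}) and (\ref{eq:Fhat_char_sol}) that $F(t,x,v)=G(\psi(0;t,x,v))$ and $\hat F(t,x,v)=G(\hat\psi(0;t,x,v))$, where $\psi=(X,V)$ and $\hat\psi=(\hat X,\hat V)$ solve (\ref{eq:charc_de}) and (\ref{eq:charc_de_p}) respectively, both emanating from the \emph{same} terminal data $(x,v)$ at $\tau=t$. Since $\nabla_{x,v}G\in L^\infty$, the mean value theorem immediately gives
\[
  |F(t,x,v)-\hat F(t,x,v)| \le \|\nabla_{x,v}G\|_{L^\infty}\,|\psi(0)-\hat\psi(0)|,
\]
so the whole problem reduces to controlling the trajectory gap $|\psi(\tau)-\hat\psi(\tau)|$ as $\tau$ runs backward from $t$ to $0$.

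Next I would subtract the two characteristic ODEs. Writing $\delta X = X-\hat X$ and $\delta V = V-\hat V$, the $X$-component obeys $\frac{d}{d\tau}\delta X = g'(V)-g'(\hat V)$, which the bound $|g''|\le C$ controls by $C|\delta V|$. The $V$-component obeys $\frac{d}{d\tau}\delta V = m(\tau,\psi)-\hat m_N(\tau,\hat\psi)$; the key step is to split this as
\[
  m(\tau,\psi)-\hat m_N(\tau,\hat\psi) = \big[m(\tau,\psi)-m(\tau,\hat\psi)\big] + \big[m(\tau,\hat\psi)-\hat m_N(\tau,\hat\psi)\big],
\]
bounding the first bracket by $\|\nabla_{x,v}m\|_{L^\infty}|\psi-\hat\psi|$ (Lipschitz regularity of the \emph{exact} drift) and the second by $\|m-\hat m_N\|_{L^\infty}$. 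The virtue of evaluating the estimation error at $\hat\psi$ is that no regularity of the rough kernel estimator $\hat m_N$ is needed: only the sup-norm of $m-\hat m_N$ enters.

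I would then integrate both components backward from the vanishing terminal data $\delta X(t)=\delta V(t)=0$ and set $e(\tau)=|\delta X(\tau)|+|\delta V(\tau)|$. Collecting the estimates yields the integral inequality
\[
  e(\tau) \le (C+\|\nabla_{x,v}m\|_{L^\infty})\int_\tau^t e(s)\,ds + \|m-\hat m_N\|_{L^\infty}(t-\tau).
\]
Reparametrizing by $s=t-\tau$ turns this into a standard \emph{forward} Gronwall inequality with the nondecreasing inhomogeneity $\|m-\hat m_N\|_{L^\infty}\,s$, whose solution gives $e(0)\le \|m-\hat m_N\|_{L^\infty}\,t\,e^{(C+\|\nabla_{x,v}m\|_{L^\infty})t}$. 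Substituting into the mean-value bound and taking the supremum over $(x,v)$ produces the stated estimate.

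The argument is essentially routine once the splitting is in place; the main technical point to watch is the direction of the Gronwall step, since the characteristics are traced \emph{backward} in $\tau$, which the substitution $s=t-\tau$ resolves cleanly. A secondary caveat is that the Lipschitz bounds for $m$ and $G$ must hold along the entire backward trajectories: I would either assume $D_{x,v}$ is invariant under both flows (or work on all of $\mathbb R\times\mathbb R$, or with periodic $x$) so that $\psi(\tau),\hat\psi(\tau)$ stay in the domain for $\tau\in[0,t]$, and presuppose — as already done for (\ref{eq:F_eq}) — that both characteristic systems are well-posed on $[0,t]$.
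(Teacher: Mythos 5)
Your proposal is correct and follows essentially the same route as the paper's proof: characteristic representation of both solutions, the splitting $m(\tau,\psi)-\hat m_N(\tau,\hat\psi)=[m(\tau,\psi)-m(\tau,\hat\psi)]+[m(\tau,\hat\psi)-\hat m_N(\tau,\hat\psi)]$, a backward Gronwall estimate on $|\psi-\hat\psi|$, and the Lipschitz bound on $G$ to transfer the trajectory gap to $F-\hat F$. The only difference is presentational (you make the reparametrization $s=t-\tau$ and the domain-invariance caveat explicit), so no changes are needed.
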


\begin{proof}
  For the original CDF equation (\ref{eq:F_eq}), the method of characteristics (\ref{eq:charc_de}) yields the representation (\ref{eq:F_char_sol}): $F(t,x,v) = G(X(0),V(0))$. Similarly, for the perturbed system (\ref{eq:Fappr}), the solution $\hat F$ is given by (\ref{eq:charc_de_p}) and (\ref{eq:Fhat_char_sol}). Subtracting (\ref{eq:charc_de}) and (\ref{eq:charc_de_p}) and integrating over $\tau$, we obtain
  \[
  \begin{aligned}
    |&\psi - \hat \psi|(s) \le \int_s^t \left( |g'(V(\tau))-g'(\hat V(\tau))| + |m(\tau,\psi(\tau)) - \hat m_N(\tau,\hat \psi(\tau))| \right) \, d\tau \\
    & \le \int_s^t \left( C |V-\hat V|(\tau) + |m(\tau,\psi(\tau)) - m(\tau,\hat \psi(\tau))| + |m(\tau,\hat \psi(\tau)) - \hat m_N(\tau,\hat \psi(\tau))| \right) \, d\tau \\
    & \le \int_s^t \left( (C+\|\nabla_{x,v}m\|_{L^\infty}) |\psi-\hat \psi|(\tau) + \|m-\hat m_N\|_{L^\infty} \right)\,d\tau.
  \end{aligned}
  \]
  By Gronwall's inequality, it follows that
  \[
    |\psi - \hat \psi|(s) \le e^{(C+\|\nabla_{x,v}m\|_{L^\infty})(t-s)}\|m-\hat m_N\|_{L^\infty} \cdot (t-s), \quad s\in[0,t].
  \]
  Therefore, the desired error estimate is a direct consequence of
  \[
    \|(F-\hat F)(t,\cdot)\|_{L^\infty_{x,v}}\le \|\nabla_{x,v}G\|_{L^\infty} |\psi - \hat \psi|(0),
  \]
  which completes the proof.
\end{proof}

Theorem \ref{thm:error} shows that, under suitable assumptions, the error between the exact solution $F$ and the perturbed solution $\hat F$ is controlled by the discrepancy between $m(t,x,v)$ and $\hat m_N(t,x,v)$. Moreover, the bound indicates that the error can grow substantially in $t$. The conditions of the theorem (namely, boundedness of $g''$ and of $\nabla_{x,v}(m,G)$) are satisfied by the test cases presented in the subsequent sections.

\section{Parametric initial data} \label{sec:cases}

This section examines special cases of the scalar problem (\ref{eq:scalar}) where explicit analytical results can be obtained. These results will also serve as a foundation for the numerical tests presented in the next section. We focus on two classes of initial conditions for which the mapping $\Xi_{t,x}: \xi \mapsto u(t,x,\xi)=v$ is injective for every $(t,x)$. In such cases, we can define the inverse map $\xi=\Xi_{t,x}^{-1}(v)$, and the system is fully characterized by
\begin{align}
  F(t,x,v) &= \mathbb P(u(t,x,\xi)\le v) = \int_{-\infty}^{\Xi_{t,x}^{-1}(v)} p(\xi') \, d\xi', \label{eq:F_anal} \\
  f(t,x,v) &= \partial_v F(t,x,v) = p\circ \Xi_{t,x}^{-1}(v) \cdot \left[ \left. \partial_\xi u \right|_{\xi= \Xi_{t,x}^{-1}(v)} \right]^{-1}, \label{eq:f_anal}
\end{align}
while the unclosed term in (\ref{eq:f_eq}) and (\ref{eq:F_eq}) reduces to
\begin{equation} \label{eq:unc_anal}
  m(t,x,v)=\mathbb E_\xi[\Delta_x u(t,x,\xi)|u=v] = \left.\Delta_x u \right|_{\xi=\Xi_{t,x}^{-1}(v)},
\end{equation}
since the conditioning uniquely determines $\xi$. Consequently, in this case $m$ does not depend on the distribution $p(\xi)$ of the random variable $\xi$.

\subsection{First type of random initial data} \label{subsec:case_1}

Let $U_T = (0,T]\times \mathbb T^d$.
\begin{theorem} \label{thm:monotonicity}
  Consider the scalar problem (\ref{eq:scalar}) on $(t,x,\xi)\in U_T \times \mathbb R$ with initial data $u_0(x,\xi)$. Assume that $\partial_\xi u(\cdot,\xi)\in C^{1,2}(U_T)\cap C(\bar U_T)$ for every $\xi$.  
  If $\partial_\xi u_0 \ge 0$ (resp. $\partial_\xi u_0\le 0$) for all $(x,\xi)$ and $\partial_\xi u_0 \not\equiv 0$, then the solution $u(t,x,\xi)$ is strictly increasing (resp. decreasing) in $\xi$ for all $(t,x,\xi)\in U_T \times \mathbb R$.

  In particular, if $\lambda=\inf_{\mathbb T^d}\partial_\xi u_0(\cdot,\xi)>0$ for all $\xi$, then there exists a constant $K=K(\xi,T)$ such that $\partial_\xi u\ge \lambda e^{Kt}$ on $U_T$ for all $\xi$.
\end{theorem}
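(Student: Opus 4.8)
The plan is to reduce the whole statement to a maximum-principle analysis of the linear equation satisfied by $w := \partial_\xi u$. First I would differentiate the viscous balance law (\ref{eq:scalar}) (with $Q=\epsilon\Delta_x u$) in the parameter $\xi$. Interchanging $\partial_\xi$ with $\partial_t,\nabla_x,\Delta_x$ — legitimate under the assumed regularity $\partial_\xi u\in C^{1,2}(U_T)\cap C(\bar U_T)$ — and using $\nabla_x\cdot g(u)=g'(u)\cdot\nabla_x u$, I obtain for each fixed $\xi$ the linear parabolic equation
\[
  \partial_t w + g'(u)\cdot\nabla_x w + \big(g''(u)\cdot\nabla_x u\big)w = \epsilon\Delta_x w,
  \qquad w(0,\cdot,\xi)=\partial_\xi u_0(\cdot,\xi),
\]
posed on $U_T=(0,T]\times\mathbb T^d$. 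The coefficients $b:=g'(u)$ and $c:=g''(u)\cdot\nabla_x u$ are bounded and continuous on $\bar U_T$, using smoothness of $g$ and of the viscous solution $u$; and since the torus has no spatial boundary, the parabolic boundary reduces to the initial slice $\{t=0\}$.

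I expect the main obstacle to be that the zeroth-order coefficient $c=g''(u)\cdot\nabla_x u$ carries no definite sign, so the maximum principle cannot be applied to $w$ directly. I would neutralize this with the exponential substitution $z:=e^{-\kappa t}w$, choosing $\kappa\ge\sup_{\bar U_T}|c|$, so that $z$ solves an equation of the same type but with the nonnegative zeroth-order coefficient $c+\kappa\ge 0$. For such an operator the weak (minimum) principle applies: since $z(0,\cdot,\xi)=\partial_\xi u_0(\cdot,\xi)\ge 0$ and there is no lateral boundary, $z\ge 0$, hence $w\ge 0$ on $U_T$ and $\xi\mapsto u(t,x,\xi)$ is non-decreasing. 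To upgrade to strict monotonicity I would invoke the strong maximum principle: if the nonnegative solution $w(\cdot,\cdot,\xi)$ vanished at an interior point $(t_0,x_0)$ with $t_0>0$, it would vanish identically on $\mathbb T^d\times[0,t_0]$, forcing $\partial_\xi u_0(\cdot,\xi)\equiv 0$ and contradicting $\partial_\xi u_0\not\equiv 0$. Hence $\partial_\xi u=w>0$ on $U_T$, giving strict monotonicity; the non-increasing case follows by applying the same argument to $-w$.

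For the quantitative lower bound in the ``in particular'' clause, I would compare $w$ with an explicit spatially homogeneous subsolution. Setting $\underline w(t):=\lambda e^{Kt}$ with $K:=-\sup_{\bar U_T}\big(g''(u)\cdot\nabla_x u\big)$ and writing $L\phi:=\partial_t\phi-\epsilon\Delta_x\phi+b\cdot\nabla_x\phi+c\,\phi$, a direct substitution gives $L\underline w=(K+c)\underline w\le 0$, so $\underline w$ is a subsolution, while $\underline w(0)=\lambda\le\inf_{\mathbb T^d}\partial_\xi u_0(\cdot,\xi)=w(0,\cdot,\xi)$. Applying the comparison principle (again via the exponential shift) to $\underline w-w$ then yields $\partial_\xi u=w\ge\lambda e^{Kt}$ on $U_T$, with $K=K(\xi,T)$ depending on $\xi$ and $T$ through the supremum of $g''(u)\cdot\nabla_x u$ over $\bar U_T$.

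Beyond the indefinite sign of $c$, the only points requiring care are confirming that the coefficients are genuinely bounded on $\bar U_T$ (relying on the stated $C^{1,2}(U_T)\cap C(\bar U_T)$ regularity and smoothness of $g$) and stating the strong maximum principle in its correct minimum/nonnegative-solution form on the torus. The chain-rule differentiation in $\xi$ and the subsolution comparison are otherwise routine.
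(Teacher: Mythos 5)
Your proposal is correct and follows essentially the same route as the paper: differentiate the balance law in $\xi$, apply an exponential-in-$t$ shift to make the zeroth-order coefficient $g''(u)\cdot\nabla_x u$ nonnegative, invoke the weak and strong maximum principles on the torus (whose lack of lateral boundary you correctly exploit), and compare with an explicit exponential subsolution for the quantitative lower bound. The only cosmetic difference is that you build the subsolution $\lambda e^{Kt}$ directly for the unshifted equation, whereas the paper compares $e^{Mt}\partial_\xi u$ with $\lambda e^{-K't}$; both yield a bound of the stated form $\lambda e^{Kt}$ with $K=K(\xi,T)$.
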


\begin{proof}
  We first consider the case $\partial_\xi u_0 \ge 0$; the case $\partial_\xi u_0\le 0$ follows by replacing $\xi$ with $-\xi$. Define $w=w(t,x;\xi):=e^{Mt}\partial_\xi u(t,x,\xi)$. Then $w$ satisfies
  \[
    Lw:= \partial_t w + g'(u)\cdot \nabla_x w + (g''(u)\cdot \nabla_x u + M) w - \epsilon \Delta_x w = 0
  \]
  with initial condition $w(0,\cdot;\xi)\ge 0$, $w(0,\cdot;\xi)\not\equiv 0$. Choose $M\ge -\inf_{\bar U_T} g''(u)\cdot\nabla_x u$, which is always possible, so that $c(t,x;\xi) = g''(u)\cdot\nabla_xu + M \ge 0$. The strong maximum principle for parabolic operators \cite{evans} then implies that $w(t,x;\xi)>0$ on $U_T$ for all $\xi$. Hence $\partial_\xi u > 0$ throughout.

  For the bound, suppose $w(0,x;\xi) \ge \lambda>0$. Let $K'=K'(\xi,T) \ge \|c(\cdot;\xi)\|_{C^0(\bar U_T)}$. Then,
  \[
    L(w-\lambda e^{-K't}) = \lambda (K'-c) e^{-K't} \ge 0.
  \]
  Since $\left.(w-\lambda e^{-K't})\right|_{t=0} \ge 0$, the weak maximum principle \cite{evans} ensures $w-\lambda e^{-K't} \ge 0$ on $U_T$. Recalling $w=e^{Mt}\partial_\xi u$, we thus obtain $\partial_\xi u(t,x,\xi) \ge \lambda e^{-(K'+M)t}$.
\end{proof}

We now revisit Example~\ref{ex:nonpos}. Consider the 1D Burgers equation with $d=1$, flux $g(u)=u^2/2$, and an initial condition of separable form (referred to as Case~\textbf{I}):
\begin{equation} \label{eq:burgers}
\left\{ \
  \begin{aligned}
    & \partial_t u + u\partial_x u = \epsilon \partial_x^2 u, \quad (t,x,\xi)\in \mathbb R_+ \times \mathbb R/[0,2\pi) \times \mathbb R, \\
    & u(0,x,\xi) = s(x) + \xi,
  \end{aligned}
\right.
\end{equation}
with $s(x)$ being $2\pi$-periodic, e.g.\ $s(x)=\sin x$. Then Theorem~\ref{thm:monotonicity} applies obviously. Without loss of generality, assume $\mathbb E[\xi]=0$. It is straightforward to verify that
\begin{equation} \label{eq:uburgers}
  u(t,x,\xi) = u(t,x-\xi t,0) + \xi,
\end{equation}
which does not hold for general fluxes $g$. Differentiating with respect to $\xi$ yields
\begin{equation} \label{eq:f_anal_bur} 
  \partial_\xi u = 1-t \left. \partial_x u(t,x',0) \right|_{x'=x-\xi t},
\end{equation}
where $\xi=\Xi_{t,x}^{-1}(v)$ is uniquely determined from $u(t,x-\xi t,0)=v-\xi$. Consequently, (\ref{eq:unc_anal}) reduces to
\begin{equation} \label{eq:unc_anal_burg}
  m(t,x,v) = \left.\partial_x^2 u(t,x,\xi)\right|_{\xi=\Xi_{t,x}^{-1}(v)} = \left.\partial_x^2 u(t, x',0) \right|_{x'=x-\Xi_{t,x}^{-1}(v) t}.
\end{equation}

\begin{proposition} \label{prop:unc_case_1}
   For the model problem (\ref{eq:burgers}), $m(\cdot,v)$ in (\ref{eq:unc_anal_burg}) is periodic in $v$ with period $2\pi/t$ and satisfies
  \[ m(t,x,v) = m(t,0,v-\frac{x}{t}), \quad t>0. \]
\end{proposition}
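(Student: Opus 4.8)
The plan is to establish the two claimed identities directly from the explicit formula (\ref{eq:unc_anal_burg}) together with the structural relation (\ref{eq:uburgers}). The key observation is that both the period $2\pi/t$ in $v$ and the translation relation $m(t,x,v)=m(t,0,v-\frac{x}{t})$ should be consequences of a single underlying symmetry: the way the inverse map $\Xi_{t,x}^{-1}(v)$ depends on the combination $v-\frac{x}{t}$ and responds to shifts of $v$ by multiples of $2\pi/t$. So the first step is to understand $\Xi_{t,x}^{-1}$ explicitly. Recall that $\xi=\Xi_{t,x}^{-1}(v)$ is the unique solution of $u(t,x-\xi t,0)=v-\xi$. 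I would manipulate this defining equation rather than the formula for $m$ itself, since the spatial periodicity of $u(t,\cdot,0)$ (period $2\pi$, inherited from $s(x)=\sin x$) is the structural fact that drives everything.

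First I would prove the translation relation. Set $\zeta := \Xi_{t,0}^{-1}(v-\frac{x}{t})$, so $\zeta$ solves $u(t,-\zeta t,0)=(v-\frac{x}{t})-\zeta$. The aim is to show that $\eta := \zeta + \frac{x}{t}$ is exactly $\Xi_{t,x}^{-1}(v)$, i.e.\ that $\eta$ solves $u(t,x-\eta t,0)=v-\eta$. Substituting $\eta=\zeta+\frac{x}{t}$ gives $x-\eta t = -\zeta t$ and $v-\eta = v-\zeta-\frac{x}{t}$, so the equation for $\eta$ becomes $u(t,-\zeta t,0)=(v-\frac{x}{t})-\zeta$, which is precisely the defining equation for $\zeta$. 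By uniqueness of the inverse (guaranteed by the strict monotonicity in $\xi$ from Theorem~\ref{thm:monotonicity}), $\Xi_{t,x}^{-1}(v)=\Xi_{t,0}^{-1}(v-\frac{x}{t})+\frac{x}{t}$. Feeding this into (\ref{eq:unc_anal_burg}), the spatial argument $x-\Xi_{t,x}^{-1}(v)t = -\Xi_{t,0}^{-1}(v-\frac{x}{t})t$ coincides with the spatial argument appearing in $m(t,0,v-\frac{x}{t})$, yielding $m(t,x,v)=m(t,0,v-\frac{x}{t})$ at once.

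Next I would prove the $v$-periodicity with period $2\pi/t$. By the translation relation just proved, it suffices to establish it at $x=0$, i.e.\ to show $m(t,0,v+\frac{2\pi}{t})=m(t,0,v)$. Writing $\zeta=\Xi_{t,0}^{-1}(v)$, I would check that $\zeta+\frac{2\pi}{t}$ is the inverse image of $v+\frac{2\pi}{t}$: it must solve $u(t,-(\zeta+\frac{2\pi}{t})t,0)=(v+\frac{2\pi}{t})-(\zeta+\frac{2\pi}{t})$, i.e.\ $u(t,-\zeta t-2\pi,0)=v-\zeta$. Since $u(t,\cdot,0)$ is $2\pi$-periodic in its spatial variable, the left side equals $u(t,-\zeta t,0)$, and the equation reduces to the defining equation for $\zeta$; uniqueness then gives $\Xi_{t,0}^{-1}(v+\frac{2\pi}{t})=\Xi_{t,0}^{-1}(v)+\frac{2\pi}{t}$. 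Substituting into (\ref{eq:unc_anal_burg}) for $m(t,0,\cdot)$, the relevant spatial argument shifts by $-(\frac{2\pi}{t})t=-2\pi$, and $2\pi$-periodicity of $\partial_x^2 u(t,\cdot,0)$ leaves $m$ unchanged, giving the claimed period.

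The main obstacle, modest as it is, lies in handling the inverse map carefully: I must ensure $\Xi_{t,x}^{-1}$ is well-defined and single-valued so that the uniqueness arguments above are legitimate. This is exactly where Theorem~\ref{thm:monotonicity} is essential, since it guarantees $u(t,x,\xi)$ is strictly increasing in $\xi$ for all $(t,x)$ when $\partial_\xi u_0\equiv 1>0$, making $\Xi_{t,x}$ a bijection onto its range. A secondary point worth verifying is that the $2\pi$-spatial-periodicity of $u(t,\cdot,0)$ (and hence of $\partial_x^2 u(t,\cdot,0)$) genuinely holds for the viscous Burgers solution under periodic boundary conditions; this is immediate from the periodicity of the initial datum $s(x)$ and uniqueness of the classical solution. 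Once these points are secured, both identities follow purely algebraically by the shift-and-uniqueness mechanism, with no genuine computation required.
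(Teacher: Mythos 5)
Your proposal is correct and follows essentially the same route as the paper: both identities are obtained by the shift-and-uniqueness mechanism for $\Xi_{t,x}^{-1}$, using the spatial $2\pi$-periodicity of $u(t,\cdot,0)$ together with the relation $u(t,x,\xi)=u(t,x-\xi t,0)+\xi$, and your translation identity $\Xi_{t,x}^{-1}(v)=\Xi_{t,0}^{-1}(v-\tfrac{x}{t})+\tfrac{x}{t}$ is exactly the paper's key step. The only cosmetic difference is that you reduce the periodicity claim to $x=0$ first (the paper proves it at general $x$ directly), and you are slightly more explicit about invoking Theorem~\ref{thm:monotonicity} for the well-definedness of the inverse map.
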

\begin{proof}
  Since $u$ is $2\pi$-periodic in $x$, (\ref{eq:uburgers}) implies $u(t,x,\xi+2\pi/t) = u(t,x,\xi) + 2\pi/t$, hence $\Xi_{t,x}^{-1} (v+2\pi/t) = \Xi_{t,x}^{-1}(v) + 2\pi/t$. Substituting into (\ref{eq:unc_anal_burg}) and using the $x$-periodicity of $\partial_x^2 u$, we obtain
  \[
    m(t,x,v+2\pi/t) = \left.\partial_x^2 u(t,x',0) \right|_{x'=x-\Xi_{t,x}^{-1}(v)t - 2\pi} = m(t,x,v).
  \]

  For the second equality, we show that
  \[
    U_{t,0}^{-1}\left( v-\frac{x}{t} \right) = \Xi_{t,x}^{-1}(v) - \frac{x}{t}.
  \]
  Let $\xi = U_{t,0}^{-1}\left( v-x/t \right)$. It is seen that $v-x/t = u(t,0,\xi) = u(t,-\xi t,0)+\xi$, which implies $v = u(t,x-(\xi+x/t)t,0) + \xi+x/t = u(t,x,\xi+x/t)$, and thus $\xi+x/t = \Xi_{t,x}^{-1}(v)$. Therefore, we have
  \[
  \begin{aligned}
    m(t,x,v) = \left.\partial_x^2 u(t, x',0) \right|_{x'=x-\Xi_{t,x}^{-1}(v) t}
    =\left.\partial_x^2 u(t, x',0) \right|_{x'=0-U_{t,0}^{-1}(v-\frac{x}{t}) t}
    = m(t,0,v-\frac{x}{t}).
  \end{aligned}
  \]
  This completes the proof.
\end{proof}

\subsection{Second type of random initial data} \label{subsec:case_2}

Now restrict ourselves to the 1D Burgers equation. Consider
\begin{equation} \label{eq:burgers_2}
\left\{ \
  \begin{aligned}
    & \partial_t u + u\partial_x u = \epsilon \partial_x^2 u, \quad (t,x,\xi)\in \mathbb R_+ \times \mathbb R/[0,2\pi) \times \mathbb R, \\
    & u(0,x,\xi) = \xi s(x)
  \end{aligned}
\right.
\end{equation}
with a periodic profile $s(x)=s(x+2\pi)$. We establish the following property.

\begin{proposition} \label{prop:case2}
  For system (\ref{eq:burgers_2}), if $s(x)=-s(2\pi-x)$ for all $x$ and $s(x)$ does not change sign on $(0,\pi)$, then the solution $u(t,x,\xi)$ is monotone in $\xi$ for all $t,\xi$ and $x\in(0,\pi)\cup(\pi,2\pi)$.
\end{proposition}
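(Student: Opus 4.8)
The plan is to exploit the symmetry $s(x)=-s(2\pi-x)$ to reduce Proposition~\ref{prop:case2} to the monotonicity result already established in Theorem~\ref{thm:monotonicity}. The key observation is that the antisymmetry of the initial profile about $x=\pi$, combined with the reflection symmetry of the Burgers equation $\partial_t u + u\partial_x u = \epsilon\partial_x^2 u$, should force a corresponding antisymmetry of the solution. Concretely, I would first verify that if $u(t,x,\xi)$ solves \eqref{eq:burgers_2} with data $\xi s(x)$, then $\tilde u(t,x,\xi):=-u(t,2\pi-x,\xi)$ also solves the Burgers equation (the map $x\mapsto 2\pi-x$ flips the sign of $\partial_x$ but leaves $\partial_x^2$ invariant, and the nonlinearity $u\partial_x u$ is preserved under $u\mapsto -u$, $x\mapsto 2\pi-x$). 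Since $\tilde u(0,x,\xi) = -u(0,2\pi-x,\xi) = -\xi s(2\pi-x) = \xi s(x) = u(0,x,\xi)$, uniqueness of the viscous solution gives $u(t,x,\xi) = -u(t,2\pi-x,\xi)$ for all $t,x,\xi$. In particular $u(t,\pi,\xi)=0$ for all $\xi$, which is why the statement excludes $x=\pi$.

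Next I would set up the differentiated problem as in Theorem~\ref{thm:monotonicity}: writing $w=\partial_\xi u$, the function $w$ satisfies the linear parabolic equation $Lw = \partial_t w + u\,\partial_x w + (\partial_x u)w - \epsilon\partial_x^2 w = 0$ with initial data $w(0,x,\xi)=s(x)$. The difficulty compared to Case~\textbf{I} is that here $\partial_\xi u_0 = s(x)$ does \emph{not} have a fixed sign on the whole torus: by antisymmetry $s$ changes sign across $x=\pi$ (and $x=0\equiv 2\pi$). So the global strong maximum principle cannot be applied directly. The remedy is to work on the half-domain: since $s$ does not change sign on $(0,\pi)$, say $s\ge 0$ (resp. $s\le 0$) there, I would apply the maximum principle on the space-time region $(0,T]\times(0,\pi)$ with boundary data controlled at $x=0$ and $x=\pi$.

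The main obstacle, and the step requiring the most care, is handling the lateral boundaries $x=0$ and $x=\pi$ of the half-domain, since the restriction of \eqref{eq:burgers_2} to $(0,\pi)$ is not a self-contained boundary-value problem. The clean resolution is again the antisymmetry: I claim $w(t,x,\xi)=w(t,2\pi-x,\xi)$, i.e. $\partial_\xi u$ is symmetric about $x=\pi$. This follows by differentiating the relation $u(t,x,\xi)=-u(t,2\pi-x,\xi)$ in $\xi$. Consequently $w$ is an even function about both $x=\pi$ and (by periodicity) $x=0$, so $\partial_x w$ vanishes at $x=0,\pi$ for all $t$. Thus $w$ on $[0,\pi]$ solves the linear parabolic equation with homogeneous Neumann conditions at the endpoints, where the first-order transport coefficient $u$ itself vanishes at $x=0,\pi$ by the antisymmetry $u(t,\pi,\xi)=0$ and $u(t,0,\xi)=0$. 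With this closed boundary-value problem on $[0,\pi]$ and sign-definite initial data $w(0,\cdot)=s\ge 0$ (resp. $\le 0$) on $(0,\pi)$, the strong maximum principle \cite{evans} yields $w>0$ (resp. $w<0$) on $(0,T]\times(0,\pi)$, hence strict monotonicity of $u(t,x,\xi)$ in $\xi$ there; the symmetry $w(t,x,\xi)=w(t,2\pi-x,\xi)$ then transfers the same conclusion to $x\in(\pi,2\pi)$.

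A remark I would include is that the argument hinges on the even symmetry of $w$ forcing the Neumann condition, which is what converts the a priori open half-problem into a well-posed one amenable to Theorem~\ref{thm:monotonicity}-type reasoning; without the antisymmetry hypothesis on $s$ no such reduction is available and monotonicity can genuinely fail, consistent with the positivity obstructions discussed in Section~\ref{sec:pos}.
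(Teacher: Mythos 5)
Your overall strategy---use the reflection symmetry $u(t,x,\xi)=-u(t,2\pi-x,\xi)$ to reduce to a boundary-value problem for $w=\partial_\xi u$ on the half-domain $[0,\pi]$ and then invoke the maximum principle---is exactly the paper's route, and your verification that $\tilde u(t,x,\xi):=-u(t,2\pi-x,\xi)$ solves the same problem is correct. However, there is a genuine sign error at the decisive step. Differentiating $u(t,x,\xi)=-u(t,2\pi-x,\xi)$ in $\xi$ gives $w(t,x,\xi)=-w(t,2\pi-x,\xi)$: the derivative $w$ is \emph{odd} about $x=\pi$, not even. (This is also forced by the initial data: $w(0,x,\xi)=s(x)$ is odd about $\pi$ by hypothesis, and by your own observation $u(t,\pi,\xi)\equiv 0$ in $\xi$ immediately yields $w(t,\pi,\xi)=0$, which is incompatible with the strict positivity up to the boundary that a Neumann problem with Hopf's lemma would produce.) Consequently the boundary conditions on $[0,\pi]$ are homogeneous \emph{Dirichlet}, $w=0$ at $x=0,\pi$, not Neumann, and this is precisely what the paper uses: with $w(0,\cdot)=s\ge 0$, $s\not\equiv 0$, the strong maximum principle gives $w>0$ on $(0,T]\times(0,\pi)$.

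The error propagates to your last step: the (false) even symmetry would transfer $w>0$ to $(\pi,2\pi)$, whereas the correct odd symmetry gives $w<0$ there, i.e.\ $u$ is \emph{decreasing} in $\xi$ on $(\pi,2\pi)$ when $s>0$ on $(0,\pi)$. The proposition's conclusion (``monotone'') is still reached once the sign is fixed, and the repair is local---replace ``even/Neumann'' by ``odd/Dirichlet'' throughout---but as written the proof asserts a false symmetry and a false sign for $w$ on the second half-interval. One further small caveat: to apply the parabolic strong maximum principle cleanly you should, as in Theorem~\ref{thm:monotonicity}, work with $e^{Mt}\partial_\xi u$ so that the zeroth-order coefficient $\partial_x u + M$ is nonnegative.
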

\begin{proof}
  First note that $u(t,0,\xi)=u(t,\pi,\xi)=0$ for all $(t,\xi)$, since both the initial data and the governing Burgers equation preserve the symmetry $u(x,\cdot)=-u(-x,\cdot)=-u(2\pi-x,\cdot)$. Assume without loss of generality that $s>0$ on $(0,\pi)$. Then, by arguments analogous to those in the proof of Theorem~\ref{thm:monotonicity}, we consider $w=w(t,x;\xi):=e^{Mt}\partial_\xi u$, which satisfies a parabolic equation on $(t,x)\in[0,T]\times[0,\pi]$ with boundary conditions $w\equiv 0$ at $x=0,\pi$ (since $u\equiv 0$ at these points). By the strong maximum principle, we immediately deduce $w>0$ on $(t,x)\in(0,T]\times(0,\pi)$, and hence $w<0$ on $(t,x)\in(0,T]\times(\pi,2\pi)$.
\end{proof}

\begin{remark}
  In analogy with Theorem~\ref{thm:monotonicity}, one can further derive an exponentially decaying lower bound for $\partial_\xi u$ on $(0,\pi)$. Let $\lambda_1\in\mathbb R$ denote the principal eigenvalue of the operator $A:=- \epsilon \partial^2_x + u\partial_x + (\partial_x u + M) $ and let $\varphi$ be the corresponding positive eigenfunction on $(0,\pi)$. Recall that $w=e^{Mt}\partial_\xi u$ and $L=\partial_t+ A$. Then we have $L(w-\epsilon e^{-Kt}\varphi) = \epsilon (K-\lambda_1)\varphi e^{-Kt} \ge 0$ by choosing $K\ge \lambda_1$. Moreover, since $w(0,\cdot)-\epsilon \varphi \ge 0$ for sufficiently small $\epsilon$, due to $\partial_\nu w(0,\cdot)<0$ (negative outward normal derivative on the boundary), the weak maximum principle yields $\partial_\xi u\ge \epsilon e^{-(K+M)t}\varphi$.
\end{remark}

Therefore, under the assumptions of Proposition~\ref{prop:case2}, the mapping $\Xi_{t,x}:\xi\mapsto v=u(t,x,\cdot)$ is invertible for all $x\in(0,2\pi)\setminus\{\pi\}$, with the CDF and PDF given by (\ref{eq:F_anal}) and (\ref{eq:f_anal}), respectively. By Oleinik's inequality $\partial_x u<1/t$, we obtain $u(t,x,\xi)<x/t$, which implies $f(t,x,v)=0$ for all $v\ge x/t$, $t>0$ and $x\in (0,\pi)$. Furthermore, it holds that $f(t,0,v)=f(t,\pi,v)=\delta(v)$ for all $(t,v)$. For the `unclosed' term in (\ref{eq:unc_anal_burg}), it is straightforward to verify that $m(0,x,v) = -v$ for $x\ne k\pi$.

\begin{remark} \label{rem:cole_hopf}
The 1D Burgers equation in (\ref{eq:burgers}) and (\ref{eq:burgers_2}) admits a closed-form solution via the Cole-Hopf transformation:
\begin{align} 
  u(t,x,\xi) &= -2\epsilon \frac{\partial_x \phi}{\phi}, \label{eq:u_cole_hopf} \\
  \phi = \phi(t,x;\xi) &= \frac{1}{2\pi} A_0^{[\epsilon,\xi]} + \sum_{k=1}^\infty \frac{e^{-\epsilon k^2 t}}{\pi} \left( A_k^{[\epsilon,\xi]}\cos kx + B_k^{[\epsilon,\xi]} \sin kx \right), \label{eq:phi_cole_hopf}
\end{align}
with
\[
  \begin{bmatrix} A_k^{[\epsilon,\xi]} \\  B_k^{[\epsilon,\xi]} \end{bmatrix} = \int_0^{2\pi} \phi_0 (x;\epsilon,\xi) \begin{bmatrix} \cos kx \\ \sin kx \end{bmatrix} dx, 
  \quad
  \phi_0(x;\epsilon,\xi) = \exp \left[ -\frac{1}{2\epsilon} \int_0^x u(0,x',\xi)\,dx' \right].
\]
Differentiation of (\ref{eq:u_cole_hopf}) gives $\partial_x^2 u = -2\epsilon \frac{\partial_x^3\phi}{\phi} + \frac{3}{2\epsilon} u \partial_x u - \frac{u^3}{4\epsilon^2}$, where $\partial_x u$ and higher derivatives of $\phi$ can be obtained analogously. These formulas allow semi-analytical computation of the statistical quantities $f(t,x,v)$, $F(t,x,v)$, and $m(t,x,v)$ in (\ref{eq:burgers}). For given $(t,x,v)$, the corresponding $\xi=\Xi_{t,x}^{-1}(v)$ is located by solving $u(t,x,\xi)=v$ (e.g.\ via the bisection method), after which (\ref{eq:F_anal}), (\ref{eq:f_anal}), and (\ref{eq:unc_anal_burg}) apply.
\end{remark}

\section{Numerical results} \label{sec:numerical}

In this section, we present numerical results for the two 1D cases with exact solutions constructed in the previous section. Our goal is to approximate the statistical conservation law for $\hat F(t,x,v)$ by implementing Algorithm~\ref{alg:scle}. In particular, for the Burgers equation, the samples $u_i$ and $(\partial_x^2u)_i$ used in (\ref{eq:estimator}) are taken from the analytical solutions (\ref{eq:u_cole_hopf}) and (\ref{eq:phi_cole_hopf}) with $\xi=\xi_i$ for all $(t,x)$. Moreover, the (semi-)analytical solutions for PDFs/CDFs in both cases (with the 1D Burgers equation) are computed based on the procedures outlined in Remark~\ref{rem:cole_hopf}.

\subsection{Case I in (\ref{eq:burgers})} \label{subsec:test_1}

We consider the setting $\epsilon = 0.1$, $s(x)=\sin x$, and $p(\xi) = \mathcal N(0,\sigma^2)$ with $\sigma=0.5$ (a Gaussian distribution with mean $0$ and standard deviation $0.5$). The same configuration was studied in \cite{cho2014}. Figure~\ref{fig:f_anal} displays the exact solutions for the PDF $f(t,\pi,v)$. In addition, we compute the expectation and standard deviation of $u$ over the full domain as
\begin{align}
  \mathbb E_\xi [u](t,x)&=\int u(t,x,\xi)p(\xi)\,d\xi = \int u(t,x-\xi t,0)p(\xi) \, d\xi, \label{eq:Eu_anal} \\
  SD_\xi[u](t,x) &= \sqrt{\mathbb E_\xi[u^2](t,x)-\left(\mathbb E_\xi[u](t,x)\right)^2}, \label{eq:SD_anal}
\end{align}
with results shown in Figure~\ref{fig:Eu}. These plots illustrate the dissipative nature of the system: $\mathbb E_\xi [u]\to 0$ for all $x$ as $t\to\infty$, while the standard deviation exhibits more complex dynamics. In particular, Figure~\ref{fig:f_anal} shows that the profile $f(t,x=\pi,v)$ develops a bimodal structure for $t\le 2$, which then sharpens into a multi-spike pattern ($t\le 20$). In the long-time limit $t\to\infty$, we have $u\to \xi$ for all $x,\xi$, and thus $f(v)\to p(v)$ for all $x$ (as confirmed by the $t=100$ result in Figure~\ref{fig:f_anal}).

\begin{figure}[htbp]
  \centering
  \includegraphics[height=8cm]{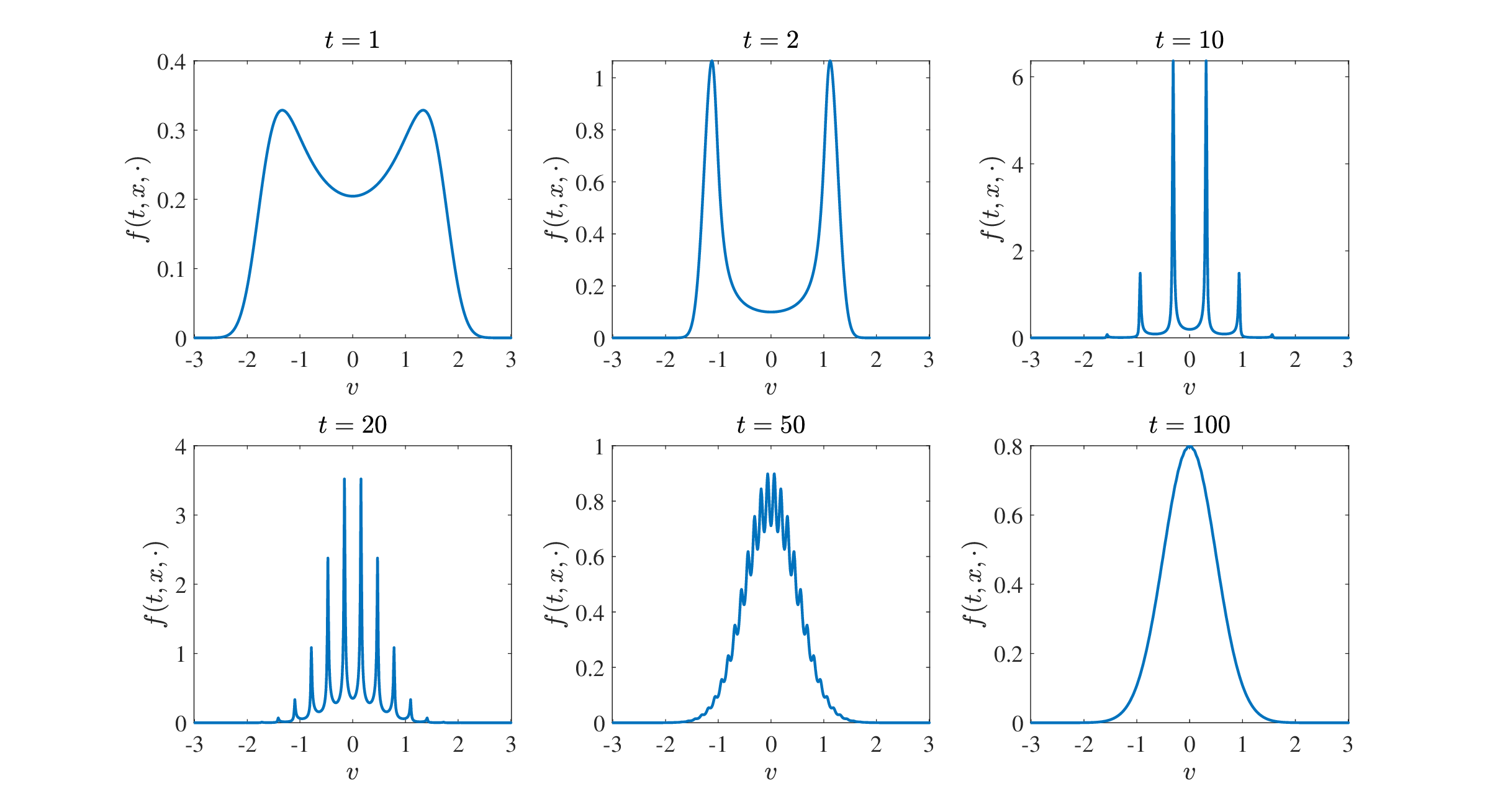}
  \caption{Analytical solutions for $f(t,x=\pi, v)$ at different values of $t$ for Case \textbf{I} in (\ref{eq:burgers}). The profiles are obtained based on the procedure outlined in Remark~\ref{rem:cole_hopf}.}
  \label{fig:f_anal}
\end{figure}

\begin{figure}[htbp]
  \centering  
  \includegraphics[height=4.5cm]{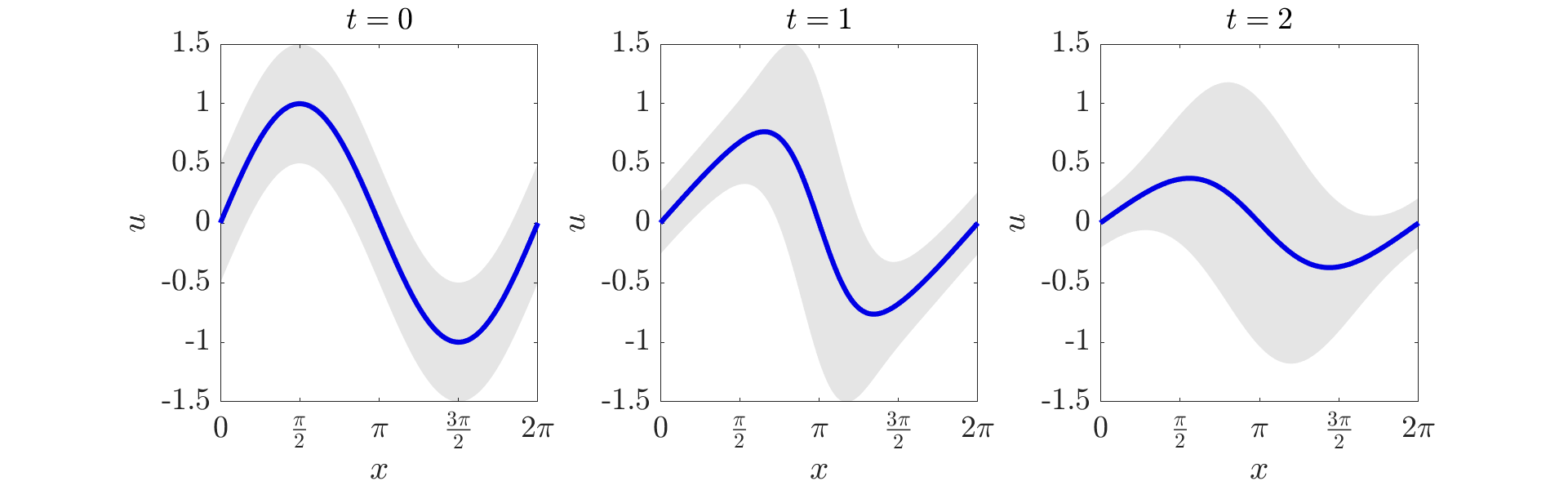}
  \caption{Analytical solutions of $\mathbb E_\xi[u]$ (solid lines) and $SD_\xi[u]$ (standard deviation, shadowed regions) at different values of $t$ for Case \textbf{I} in (\ref{eq:burgers}). The results are computed using (\ref{eq:Eu_anal}) and (\ref{eq:SD_anal}).}
  \label{fig:Eu}
\end{figure}

With the analytical solutions in hand, we now approximate the statistical conservation law with (\ref{eq:Fappr}). First, the estimator $\hat m_N(t,x,v)$ is constructed using (\ref{eq:estimator}) and (\ref{eq:kerconst}) with $h_v = 0.01$ and varying numbers $N$ of samples. Then, (\ref{eq:Fappr}) is solved through (\ref{eq:charc_de_p}) and (\ref{eq:Fhat_char_sol}). The resulting PDFs and CDFs at $x=\pi$ for various $t$ and $N$ are shown in Figure~\ref{fig:fF}. The case $N=0$ corresponds to solutions of (\ref{eq:F_eq_noeps}) given by (\ref{eq:FS0_case1}). As revealed in Example~\ref{ex:nonpos}, $f_{S0}$ becomes negative for $t>1$, although it captures the bimodal structure for $t\le 1$ and even reproduces the multiple `spikes' of $f(v)$ for $t>1$. This suggests that the spikes originate from the convective term in (\ref{eq:F_eq_noeps}), while the drift term involving $m(t,x,v)$ corrects the negativity. Increasing $N$ substantially improves the accuracy of $F(t,x,v)$ (see the right column of Figure~\ref{fig:fF}): for $N=100$, the PDFs (as $v$-derivatives of $F$) remain scattered and can even become negative; for $N=10^3$ and $5\times 10^4$, the computed CDFs and PDFs closely match the exact solutions up to $t=20$.

Figure~\ref{fig:dF} further quantifies the $L^2_v$ errors between the computed $F(t,x,v)$ and the exact solution (\ref{eq:F_anal}). Several observations can be made. First, the errors decrease significantly as $N$ increases. Since all results use a fixed $h_v=0.01$, the errors are \emph{not} expected to vanish as $N\to\infty$, because $\hat m_N(t,x,v)$ does not converge to the exact $m(t,x,v)$ in that limit (see Section~\ref{sec:approx}), which is clearly evident for all $t$. Second, the errors increase with $t$: for $N=10^4$, the error grows from $\sim 2\times 10^{-4}$ at $t=1$ to $O(10^{-3})$ at $t=20$. This behavior can be attributed both to the bounds in Theorem~\ref{thm:error}, which grow substantially with $t$, and to larger relative errors in estimating $m(t,x,v)$ (see Figure~\ref{fig:uxxcondu} later).

It is instructive to compare these errors with those from the Monte Carlo approach, i.e., direct reconstruction of CDFs from the samples $\{u_i\}$. These results, also plotted in Figure~\ref{fig:dF}, always exhibit a convergence rate of $O(N^{-1/2})$. Asymptotically (as $N\to\infty$), the statistical conservation law (\ref{eq:Fappr}) yields errors no better than $O(N^{-2/5})$ (see (\ref{eq:mise}) and Theorem~\ref{thm:error}), which is indeed larger than the Monte Carlo rate. Nevertheless, for many finite $N$, the approximated CDFs from (\ref{eq:Fappr}) perform better, demonstrating the practical value of statistical conservation laws with finitely many samples. The advantage of the PDE structure is particularly evident in the extreme case $N=0$: while it is impossible to reconstruct $F$ directly from samples, the statistical law (\ref{eq:F_eq_noeps}) still provides meaningful, albeit crude, information through $F_{S0}(t,x,v)$.

\begin{figure}[htbp]
  \centering
  \subfloat
  {\includegraphics[height=4.8cm]{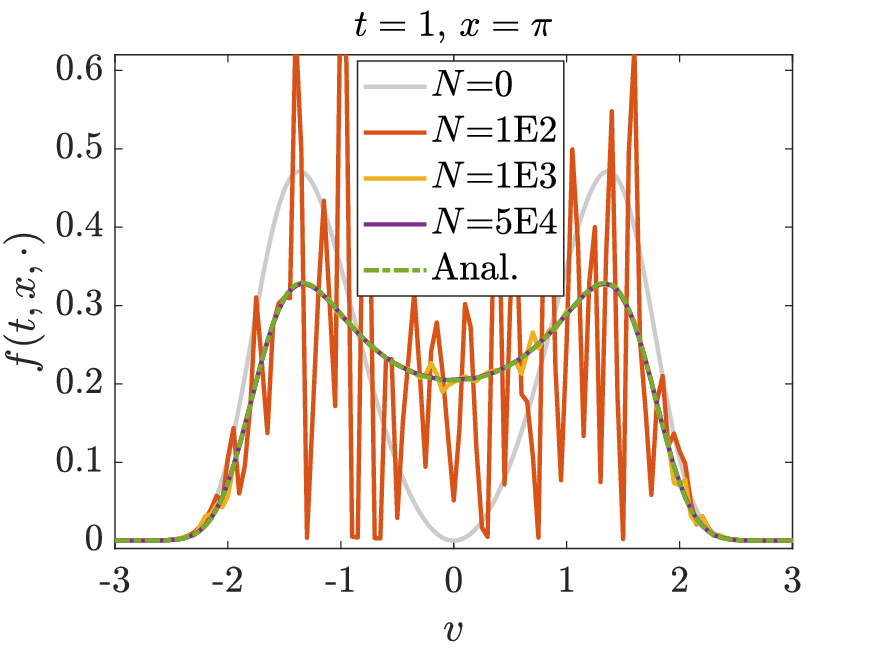}}
  \subfloat
  {\includegraphics[height=4.8cm]{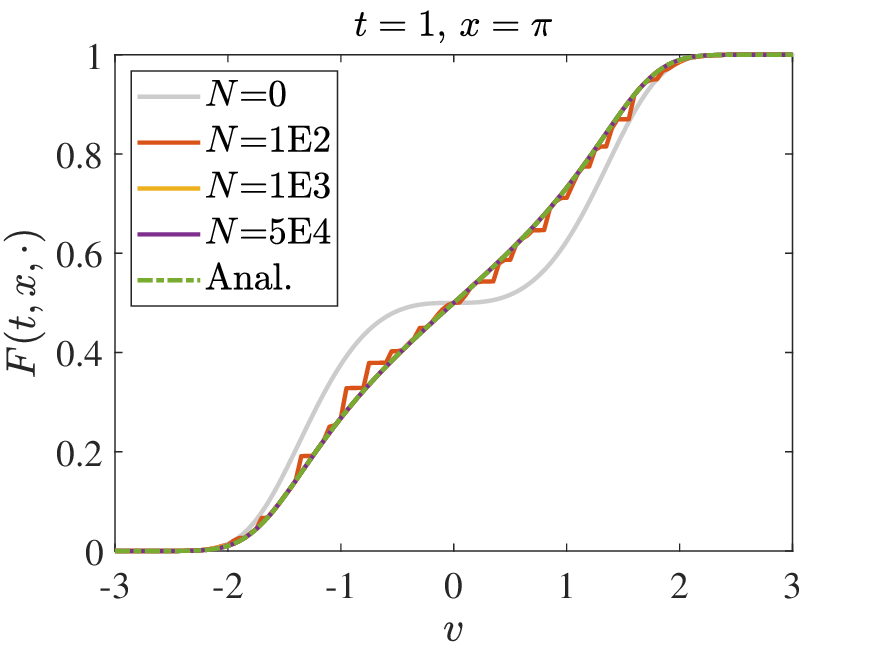}}
  \\
  \subfloat
  {\includegraphics[height=4.8cm]{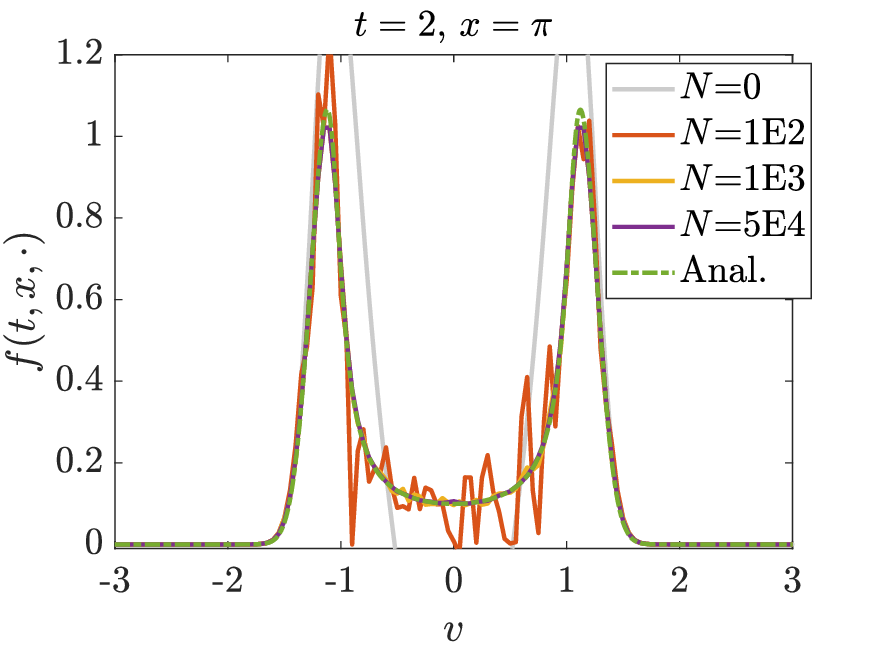}}
  \subfloat
  {\includegraphics[height=4.8cm]{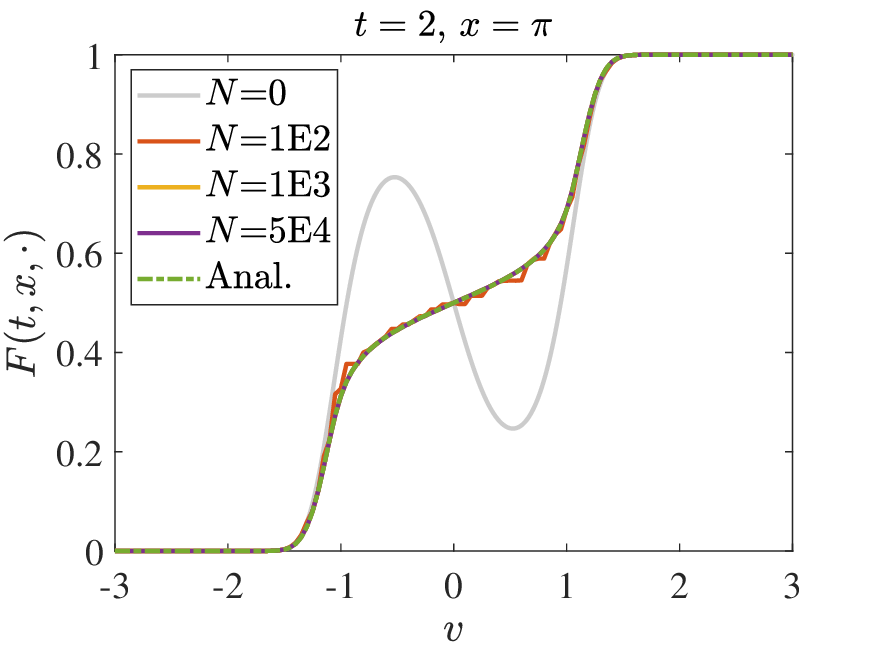}}
  \\
  \subfloat
  {\includegraphics[height=4.8cm]{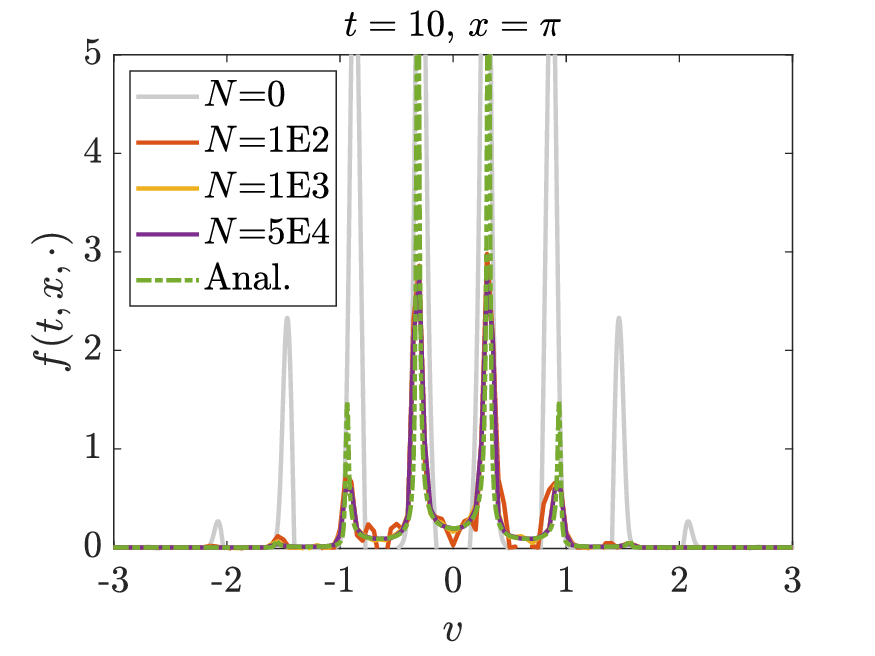}}
  \subfloat
  {\includegraphics[height=4.8cm]{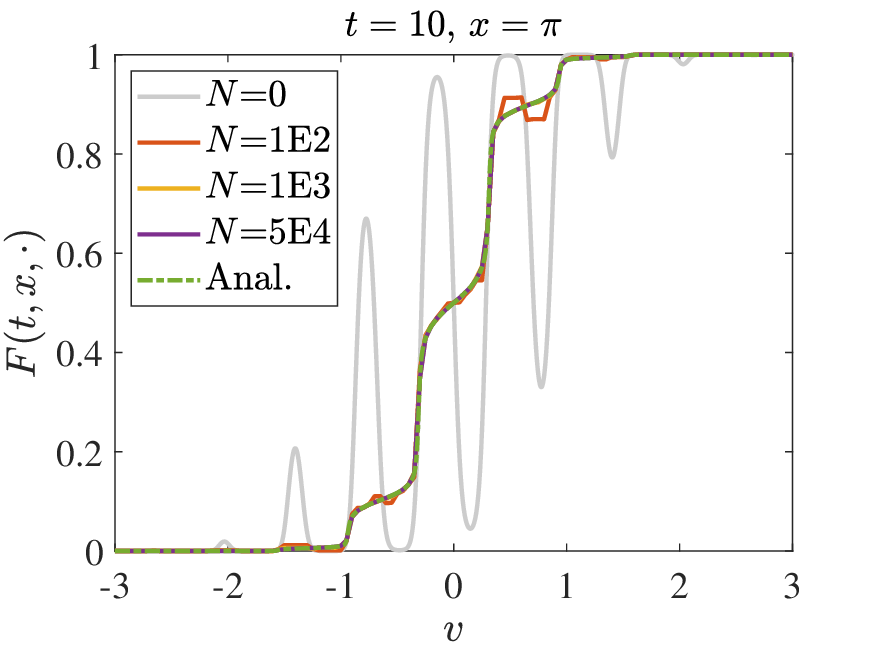}}
  \\
  \subfloat
  {\includegraphics[height=4.8cm]{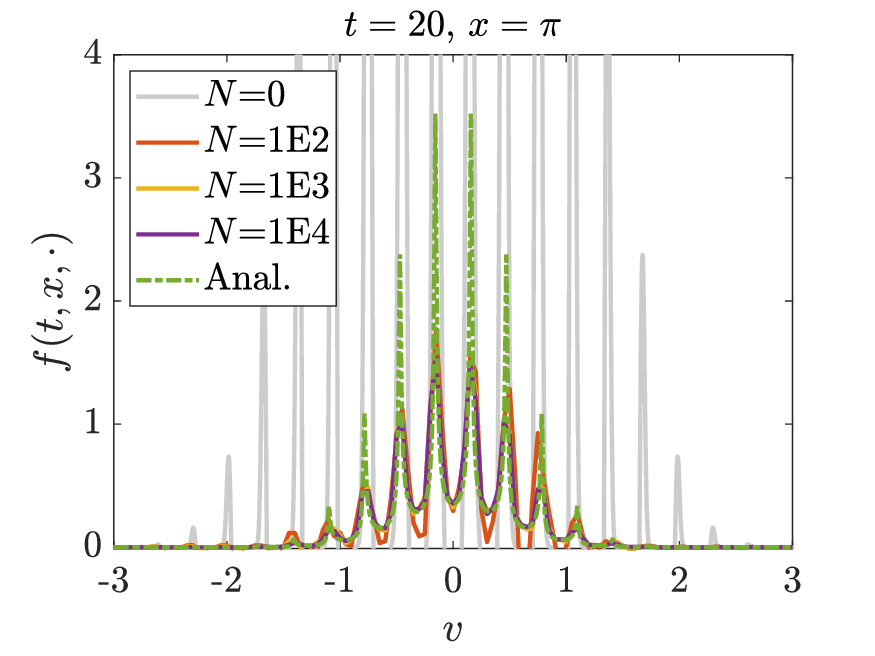}}
  \subfloat
  {\includegraphics[height=4.8cm]{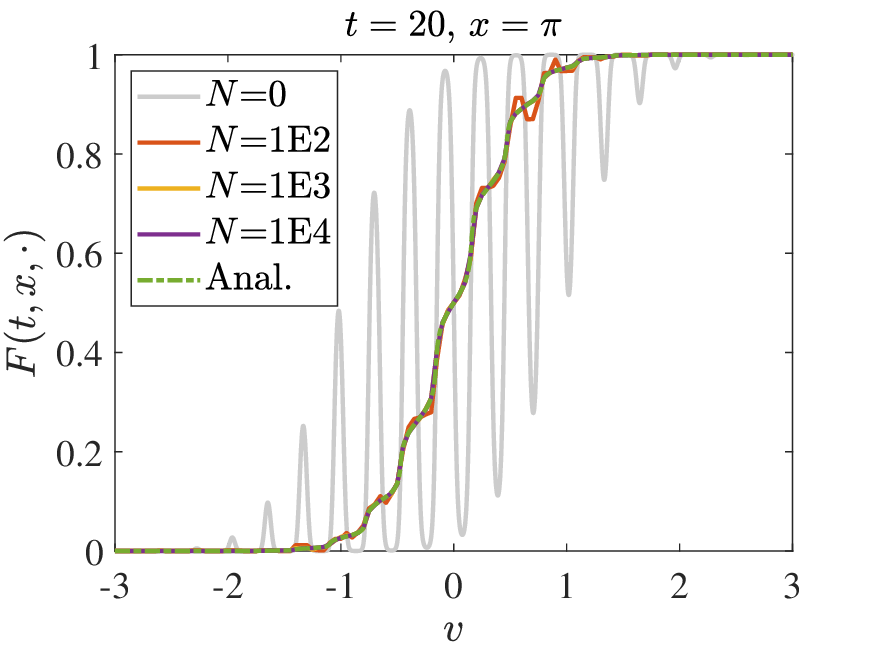}}
  \caption{Approximated PDFs (left column) and CDFs (right column) at $x=\pi$ for various times $t$ in Case \textbf{I} of (\ref{eq:burgers}). Results are computed with a fixed $h_v=0.01$ for all values of $N$. The case $N=0$ corresponds to the analytical solution of (\ref{eq:F_eq_noeps}) given in (\ref{eq:FS0_case1}). Analytical reference solutions are also included for comparison.}
  \label{fig:fF}
\end{figure}

\begin{figure}[htbp]
  \centering
  \subfloat
  {\includegraphics[height=5cm]{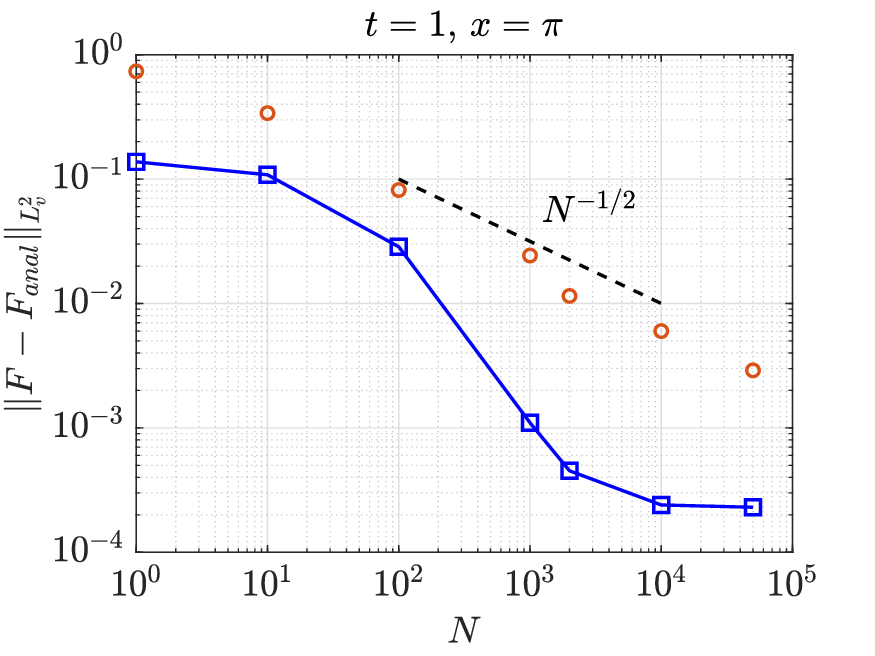}}
  \subfloat
  {\includegraphics[height=5cm]{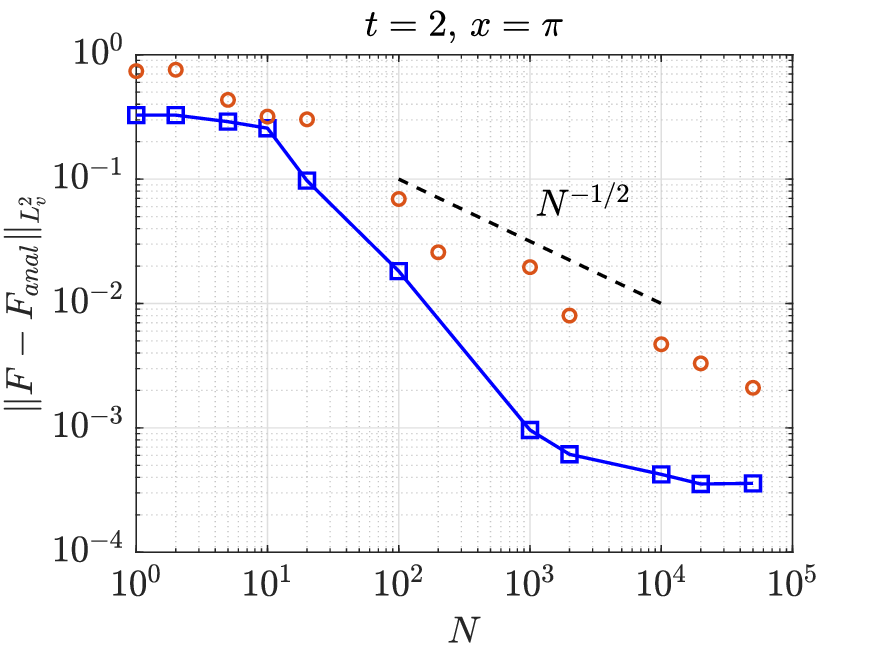}}
  \\
  \subfloat
  {\includegraphics[height=5cm]{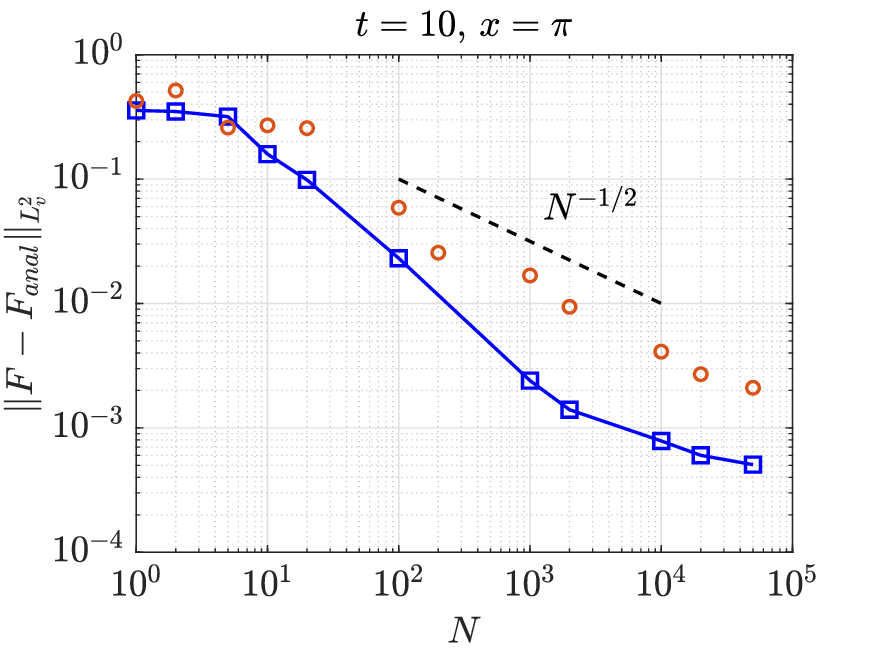}}
  \subfloat
  {\includegraphics[height=5cm]{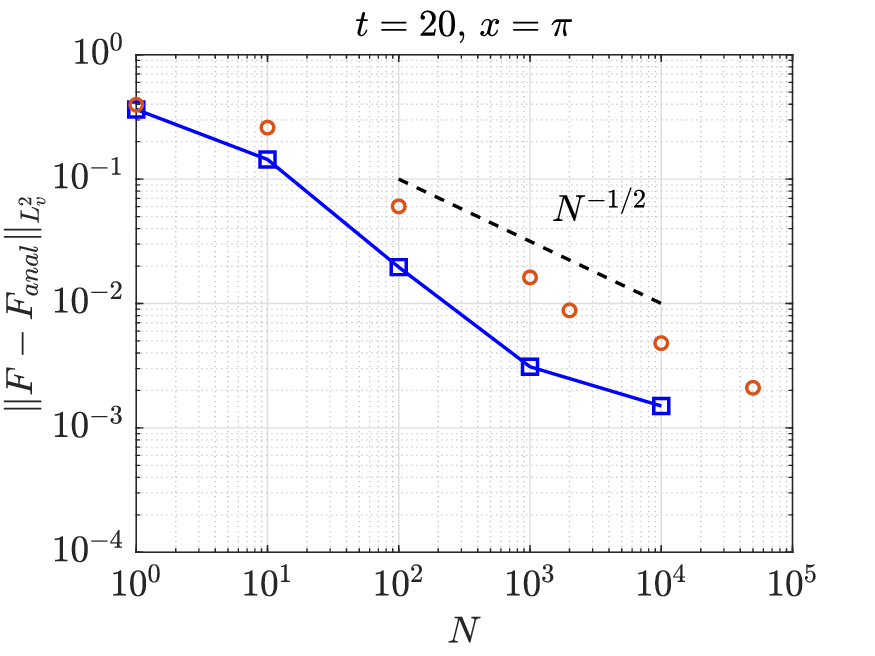}}
  \caption{$L_v^2$-errors between the approximated CDFs and the analytical solution (\ref{eq:F_anal}) at $x=\pi$ for different times $t$ in Case \textbf{I} of (\ref{eq:burgers}). Results are obtained with a fixed $h_v=0.01$ for all $N$. Red dots indicate the $L^2_v$-errors of the CDFs directly reconstructed from the samples $\{u_i\}_{i=1}^N$.}
  \label{fig:dF}
\end{figure}

Fig.~\ref{fig:uxxcondu} illustrates the accuracy of the estimator $\hat m_N(t,x,v)$ for approximating $m(t,x,v)$ with increasing $N$. The exact solutions (top row, $x=\pi$, $t=2,\,10$) confirm the properties stated in Proposition \ref{prop:unc_case_1}: periodicity in $v$, which diminishes over time. For small $N$ ($10$-$100$), $\hat m_N$ vanishes over most of the $v$-domain. Increasing $N$ to $10^4$ substantially improves the estimate near $v=0$, while large $|v|$ regions remain unresolved due to the scarcity of samples (since $\xi$ is Gaussian with zero mean). As these regions correspond to nearly zero values of $f$ (see Fig.~\ref{fig:fF}), the induced error has negligible impact. The relative errors $|\hat m_N-m|/|m|$ (bottom row, at given values of $(t,x,v)$) decrease only when $N$ is sufficiently large, showing an asymptotic trend close to $O(N^{-2/5})$ at $t=2$. At $t=10$, the errors are larger and converge more slowly, reflecting both the reduced magnitude of $m$ and the bias introduced by the fixed bandwidth $h_v$.

A complementary perspective on (\ref{eq:Fappr}) is obtained by tracking the initial points $\{(X,V)|_{\tau=0}\}$ of characteristics (\ref{eq:charc_de_p}) corresponding to the lines $\{X|_{\tau=t}=\pi,\, V|_{\tau=t}\in[-3,3]\}$. As shown in Fig.~\ref{fig:F_preimage}, these curves, exact solutions marked by yellow dots, undergo stretching, compression, and rotation during evolution. By construction, exact $m(t,x,v)$ ensures that $\{(X,V)|_{\tau=0}\}$ forms a non-decreasing path in the $G(x,v)$-landscape. Figures~\ref{fig:F_preimage}a\&b also display the approximated paths $\{(\hat X,\hat V)|_{\tau=0}\}$ for $t=1,\,2$: discrepancies are more pronounced for $N=10$ and at longer times, while $N=1000$ yields closer agreement.

\begin{figure}[htbp]
  \centering
  \subfloat
  {\includegraphics[height=5cm]{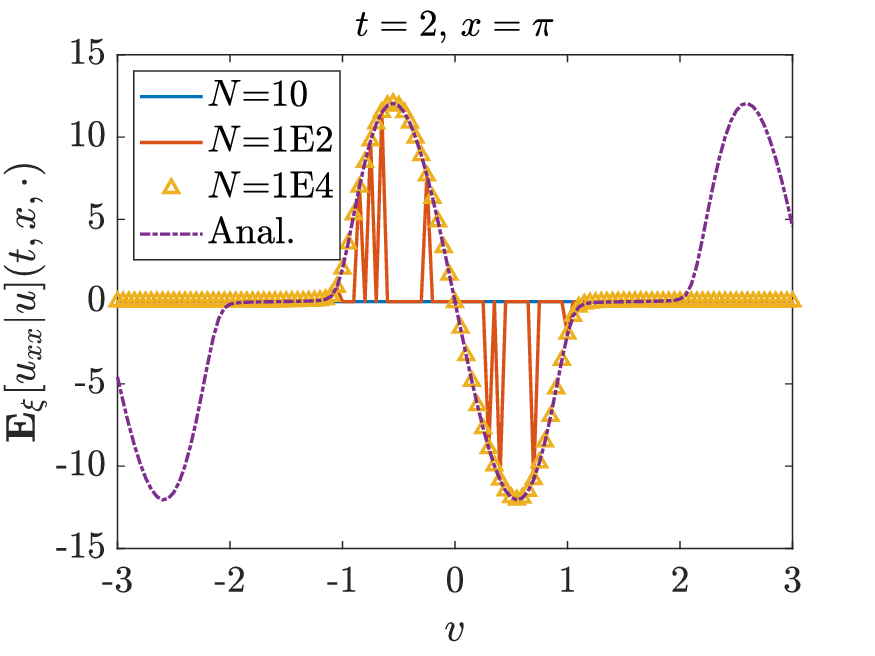}}
  \subfloat
  {\includegraphics[height=5cm]{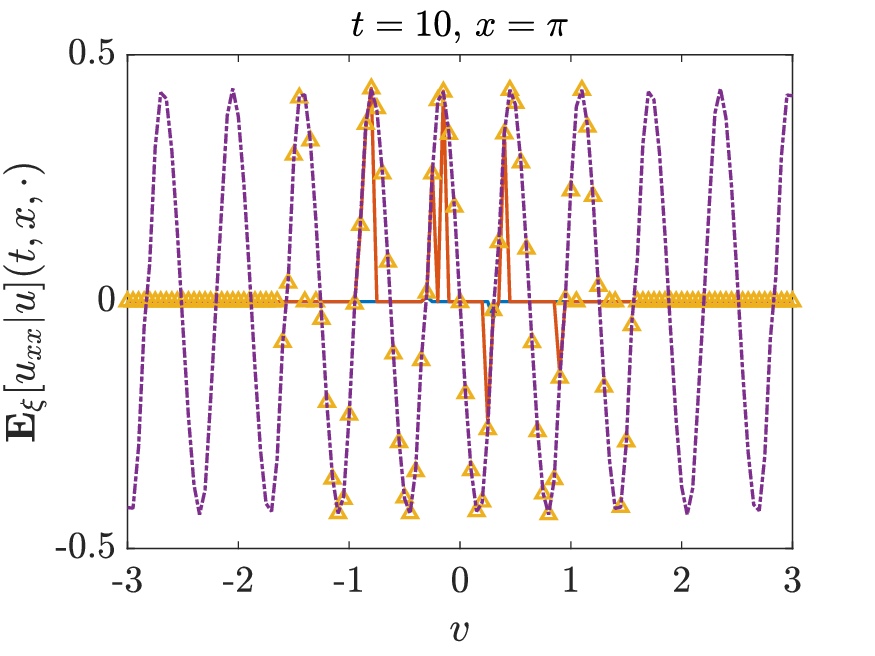}}
  \\
  \subfloat
  {\includegraphics[height=5cm]{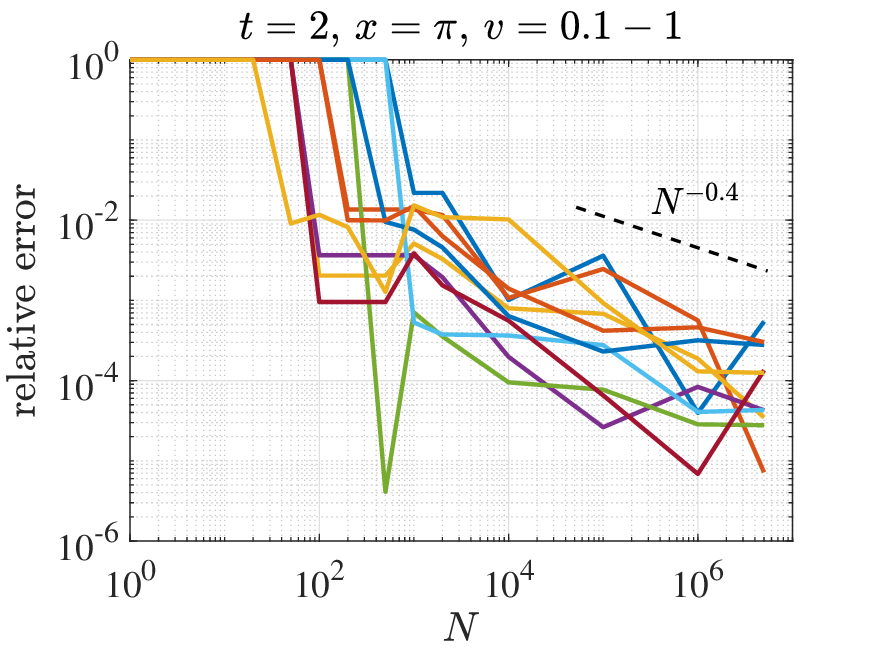}}
  \subfloat
  {\includegraphics[height=5cm]{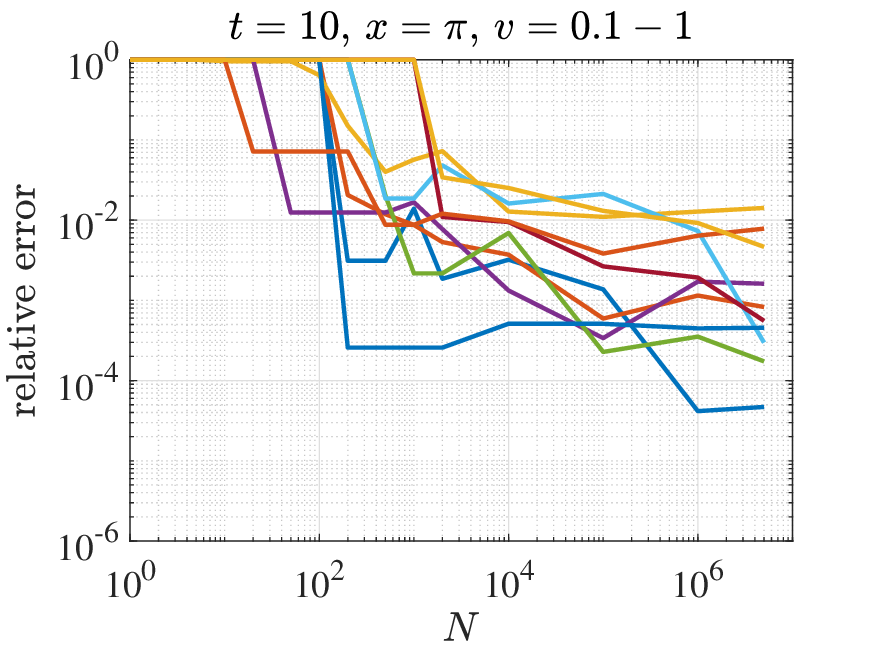}}
  \caption{Top row: numerical approximations of $m(t,x,v)=\mathbb E_\xi[\partial_x^2u|u](t,x,v)$ with different sample sizes $N$ for Case \textbf{I} in (\ref{eq:burgers}). Bottom row: relative errors $|\hat m_N-m|/|m|$ at selected $(t,x,v)$ points for varying $N$. All results are computed with a fixed $h_v=0.01$.}
  \label{fig:uxxcondu}
\end{figure}

\begin{figure}[htbp]
  \centering
  \subfloat
  {\includegraphics[height=5.6cm]{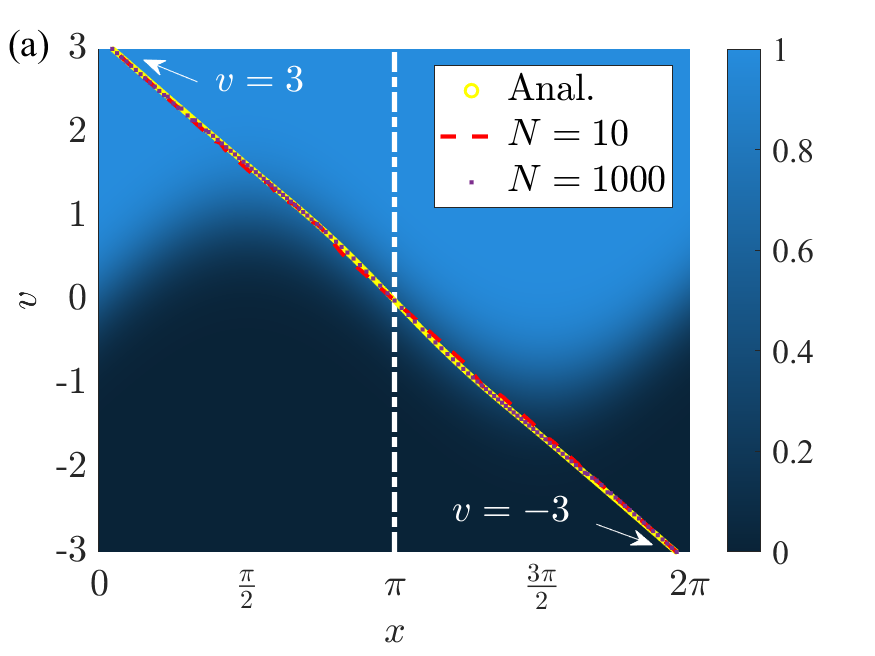}}
  \\
  \subfloat
  {\includegraphics[height=5.6cm]{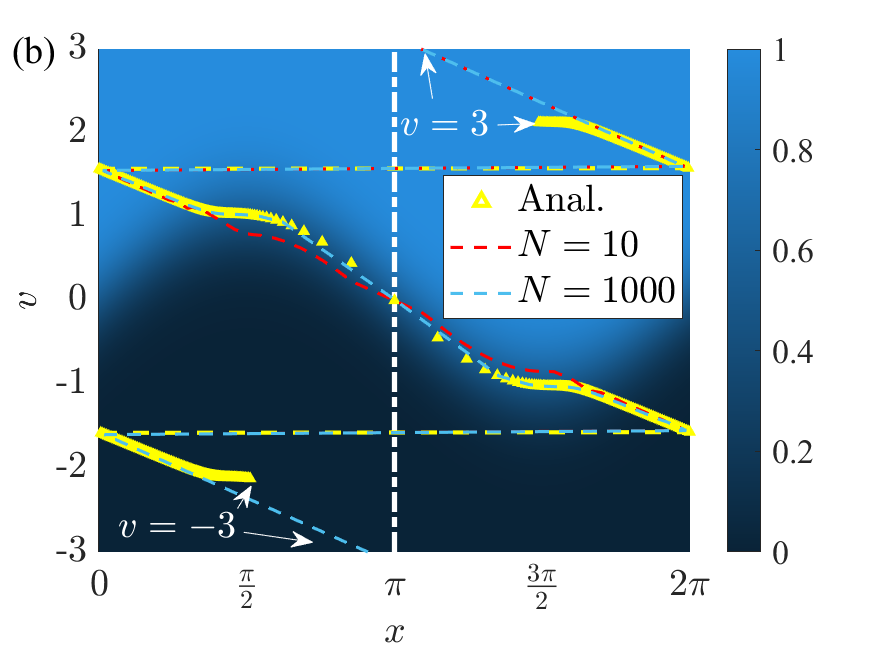}}
  \\
  \subfloat
  {\includegraphics[height=5.6cm]{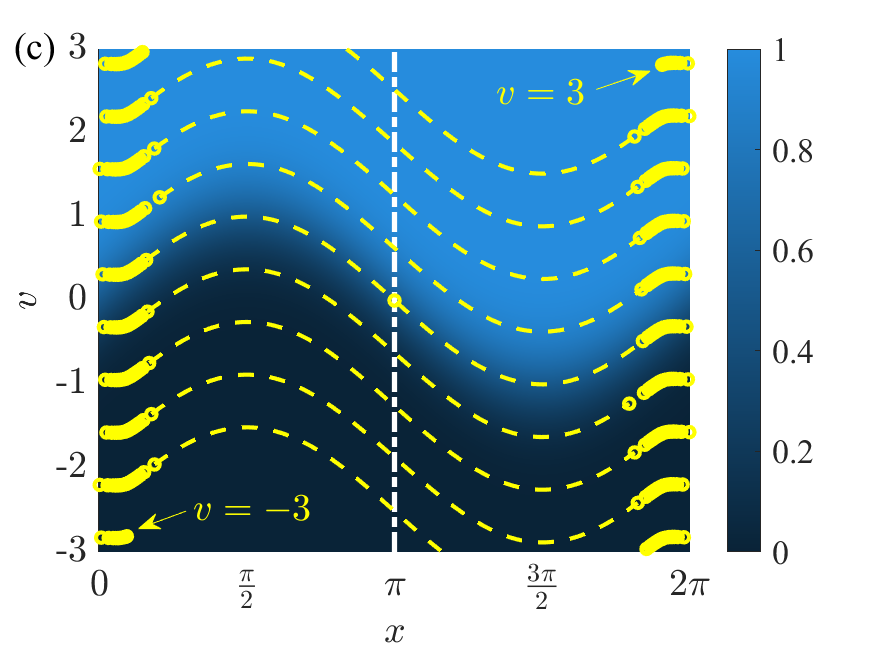}}
  \caption{Backward characteristic tracing for $\hat \psi(\tau;t,x,v)=(\hat X(\tau),\hat V(\tau))$ in (\ref{eq:charc_de_p}). The white dashed line denotes $\hat X|_{\tau=t} = \pi$ and $\hat V|_{\tau=t} \in [-3, 3]$ at times (a) $t=1$, (b) $t=2$ and (c) $t=10$. Yellow dots indicate exact initial points $\psi(0)=(X(0),V(0))$. Piecewise structures in (b) and (c) arise from the periodic boundary in $x$. Also shown in (a) and (b) are approximations $(\hat X(0),\hat V(0))$ with $N=10, \, 1000$ (using a fixed $h_v=0.01$). The background shading represents the initial data $G(x, v)$ defined in (\ref{eq:G1}), with the color scale displayed on the right.}
  \label{fig:F_preimage}
\end{figure}

Finally, Fig.~\ref{fig:hveffect} examines the effect of $h_v$ on the CDF $F(t=2,x=\pi,v)$. For each $N$, an optimal $h_v$ exists and decreases with $N$, qualitatively consistent with the MISE scaling in (\ref{eq:mise}). However, for sufficiently large $N$ (e.g., $10^4$), the influence of $h_v$ becomes negligible: the $L_v^2$-error is then dominated by discretization in solving (\ref{eq:Fappr}) with the Euler scheme tracing back-in-time the characteristics (\ref{eq:charc_de_p}). This is confirmed by the dashed line, which manifests the discretization error of the scheme solving (\ref{eq:charc_de_p}) with the exact $m(t,x,v)$ from (\ref{eq:unc_anal_burg}). The close match with the total error in $F$ under $N=10^4$ indicates that at large $N$, numerical integration error overtakes sampling error. We defer the systematic a posteriori error analysis to our future work.

\begin{figure}[htbp]
  \centering
  \includegraphics[height=5cm]{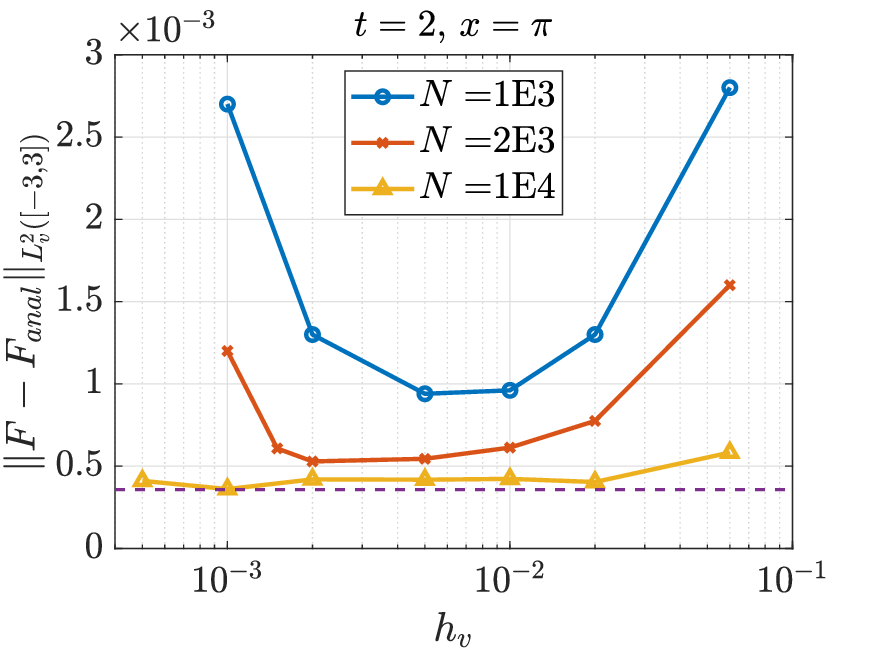}
  \caption{Influence of $h_v$ and $N$ on the $L_v^2$-errors between the approximated CDF $\hat F(t=2,x=\pi,v)$ and the analytical solution for Case \textbf{I} in (\ref{eq:burgers}). The dashed line shows the error of $\hat F$ computed by solving (\ref{eq:charc_de_p}) with the exact $m(t,x,v)$ from (\ref{eq:unc_anal_burg}), thus representing the discretization error of the scheme.}
  \label{fig:hveffect}
\end{figure}

\subsection{Case II in (\ref{eq:burgers_2})}

Here we specify $\epsilon = 0.1$, $s(x)=\sin x$, and $p(\xi)$ as the Gamma distribution $p(\xi) = \xi^{k-1}e^{-\xi/\theta}\mathbf{1}_{\xi>0}/(\Gamma(k)\theta^k)$ with $k=2.8$ and $\theta=0.18$. Denote $\Phi(x)=\int_{-\infty}^x p(x')\,dx'$. The initial condition corresponding to (\ref{eq:Fappr}) for $x\in (0,\pi)$ then reads $G(x,v)=F(0,x,v)= \Phi\left( \frac{v}{\sin x} \right)$.

Figure~\ref{fig:ctime_f_F_anal} shows the analytical solutions of $f$ (via (\ref{eq:f_anal})) and $F$ (via (\ref{eq:F_anal})) at $t=1,2$ for different values of $x\in(0,\pi)$. Starting from Gamma-distributed inputs, most $f(v)$ profiles are characterized by a single peak. For comparison, the figure also includes the solutions of (\ref{eq:F_eq_noeps}), i.e., the `zero-sample' approximations, which read for $v<x/t$:
\begin{subequations} \label{eq:FS0_case2}
\begin{align}
  F_{S0}(t,x,v) &= G(x-vt,v) = \Phi\left( \frac{v}{\sin(x-vt)} \right), \\
  f_{S0}(t,x,v) &= p\left( \frac{v}{\sin(x-vt)} \right) \cdot \frac{\partial}{\partial v}\left( \frac{v}{\sin(x-vt)} \right).
\end{align}
\end{subequations}
In this case, $f_{S0}$ remains positive. The insets report the $L_v^2$-errors of $f$ and/or $F$ at $t=1,2$ for different $x$. The `zero-sample' results remain close to the analytical solutions for small $x$, but errors grow rapidly as $x$ approaches $\pi$. This is further illustrated in Fig.~\ref{fig:ctime_Eu}, where the expectations $\mathbb E_\xi[u](t,x)$ at $t=1,2$ are plotted. The largest deviation occurs near $x=\pi$, where the `zero-sample' approximation significantly overestimates the expectation. Here the analytical results (solid lines) are obtained from (\ref{eq:Eu_anal}), while the `zero-sample' results (dashed lines) are computed as $\int v f_{S0}(\cdot,v)\,dv$.

\begin{figure}[htbp]
  \centering
  \subfloat
  {\includegraphics[height=5cm]{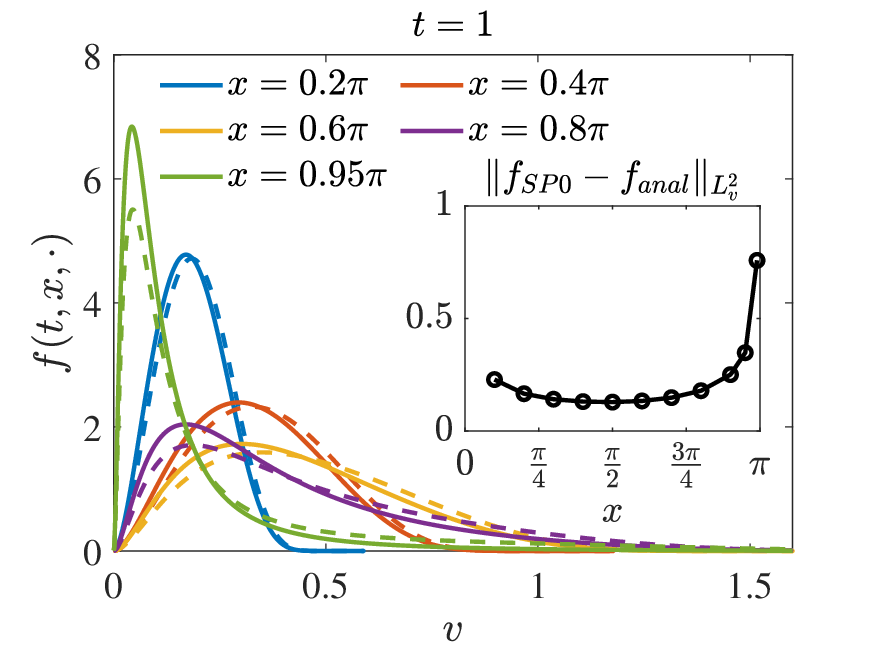}}
  \subfloat
  {\includegraphics[height=5cm]{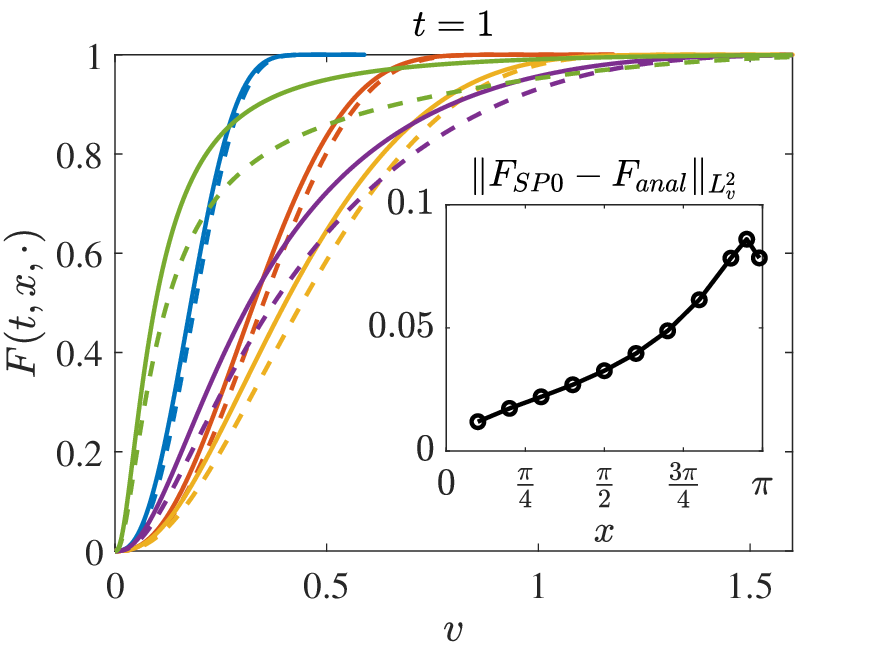}}
  \\
  \subfloat
  {\includegraphics[height=5cm]{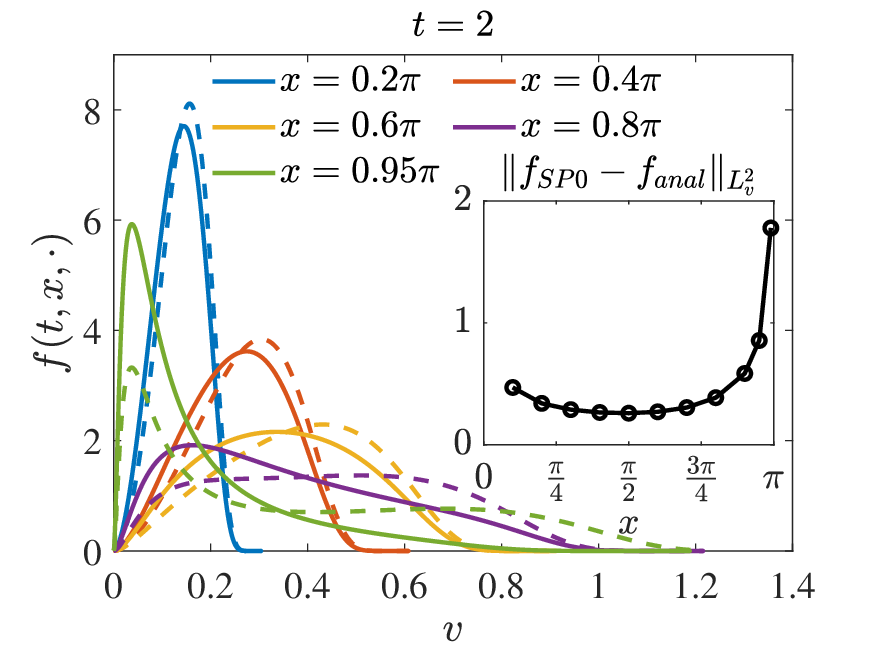}}
  \subfloat
  {\includegraphics[height=5cm]{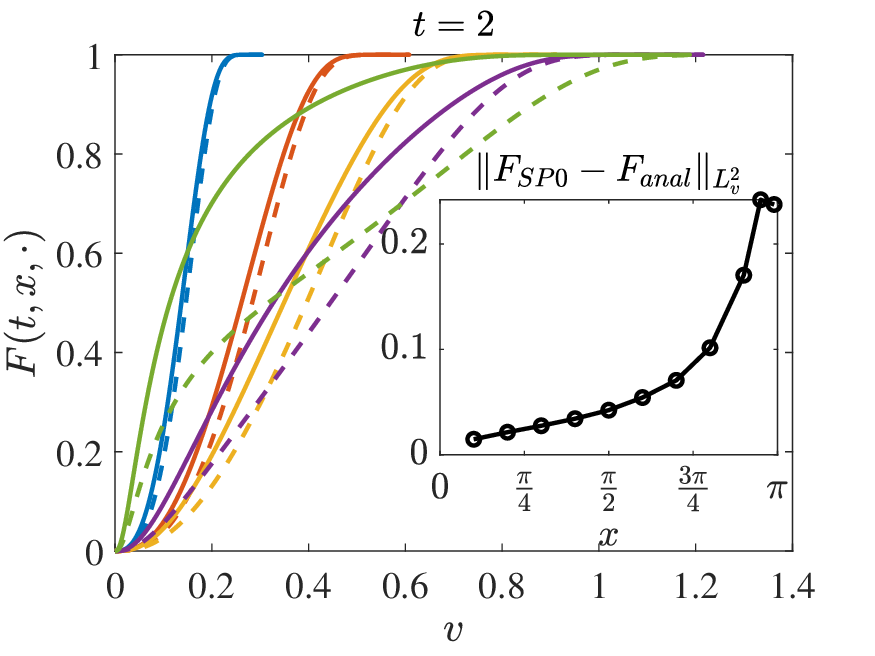}}
  \caption{Case \textbf{II} in (\ref{eq:burgers_2}): Analytical solutions (solid lines) versus `zero-sample' approximations (dashed lines) for $f(t,x,v)$ (left) and $F(t,x,v)$ (right) at $t=1$ (top row) and $t=2$ (bottom row) for different $x$. Insets show the $L_v^2$-errors of $f$ and/or $F$ at $t=1,2$ for different $x$.}
  \label{fig:ctime_f_F_anal}
\end{figure}

\begin{figure}[htbp]
  \centering
  \subfloat
  {\includegraphics[height=5cm]{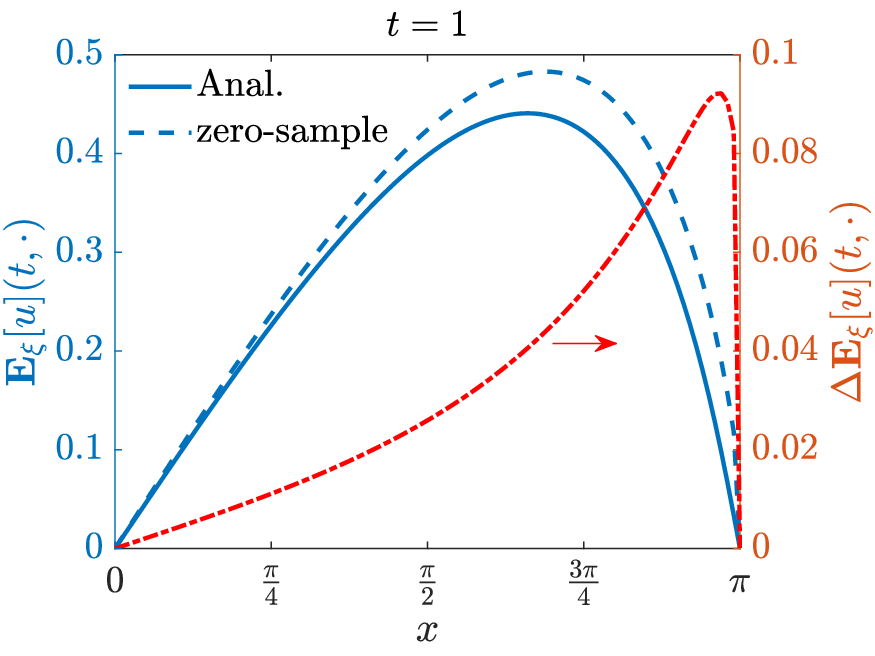}}
  \subfloat
  {\includegraphics[height=5cm]{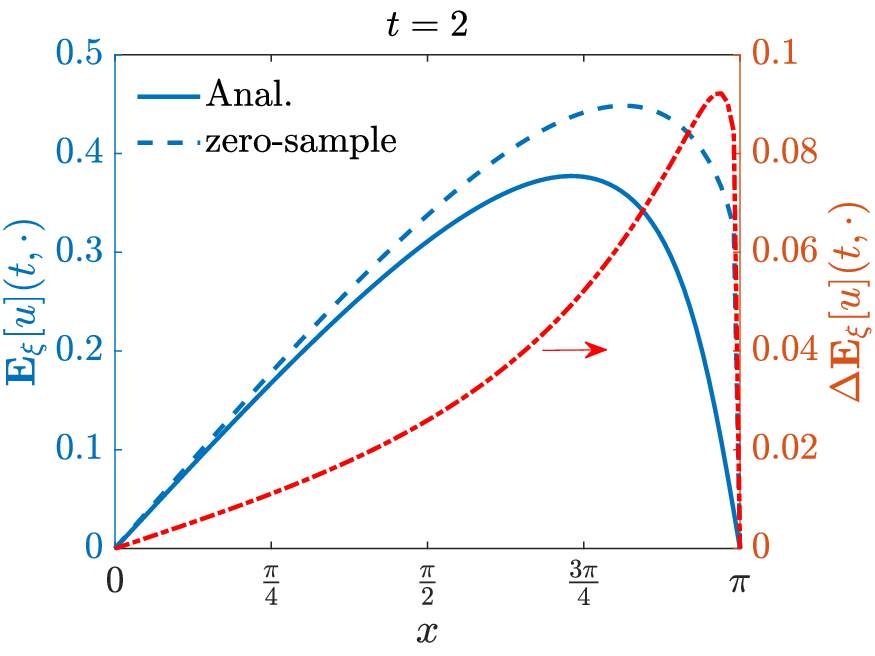}}
  \caption{Expectations $\mathbb E_\xi[u](t,x)$ at $t=1,2$ for Case \textbf{II} in (\ref{eq:burgers_2}). Analytical results (solid lines) are compared with `zero-sample' approximations (dashed lines). Right y-axis: absolute deviations between the two.}
  \label{fig:ctime_Eu}
\end{figure}

We now examine the performance of solving the statistical conservation law (\ref{eq:Fappr}) with approximated $\hat m_N(t,x,v)$. As before, we use a fixed $h_v=0.01$ to construct the estimator. Figures~\ref{fig:ctime_fF} and \ref{fig:dF_c2} present the results at $x=0.95\pi$ and $t=1,2$. With increasing $N$, the approximated PDFs and CDFs clearly converge to the analytical solutions. Quantitatively, the $L_v^2$-errors between the approximated CDFs and the analytical reference decrease significantly as $N$ grows, and remain smaller than those of the empirical CDFs directly reconstructed from the samples $u_i$ ($i=1,\dots,N$) up to $N=5\times10^4$. This finding, consistent with Case \textbf{I}, confirms that the statistical conservation law (\ref{eq:Fappr}) provides an efficient route to reconstructing CDFs from samples.

\begin{figure}[htbp]
  \centering
  \subfloat
  {\includegraphics[height=5cm]{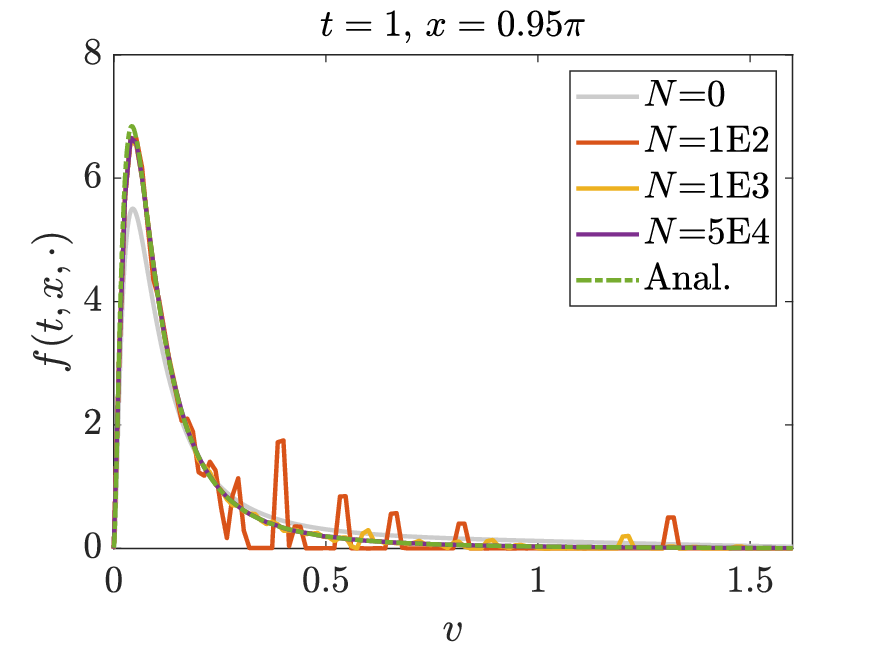}}
  \subfloat
  {\includegraphics[height=5cm]{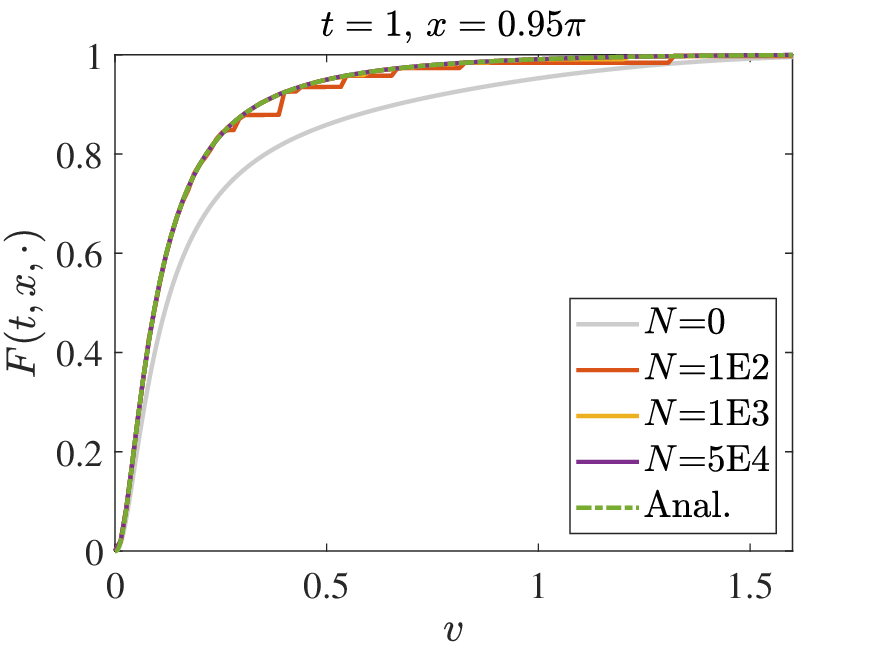}}
  \\
  \subfloat
  {\includegraphics[height=5cm]{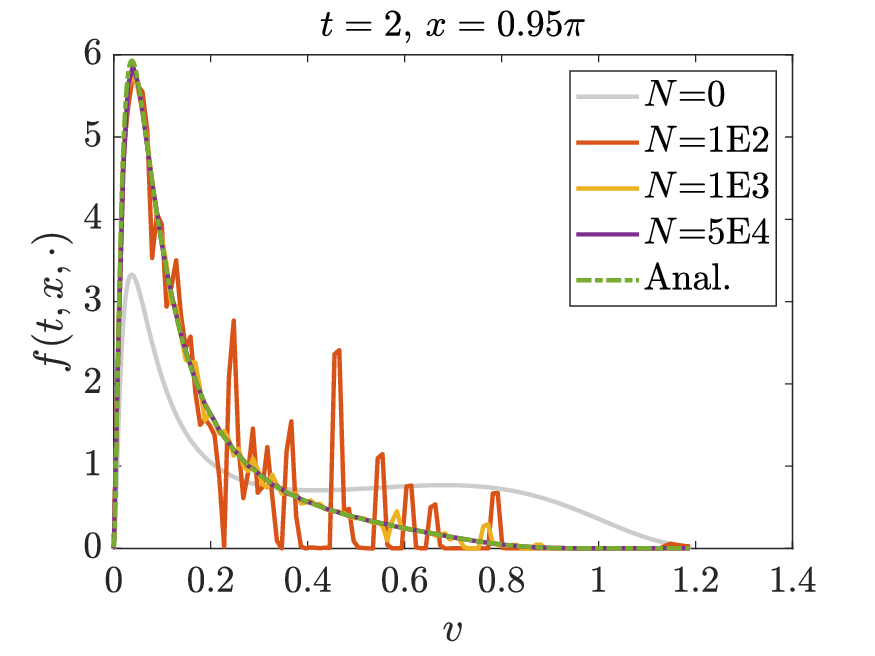}}
  \subfloat
  {\includegraphics[height=5cm]{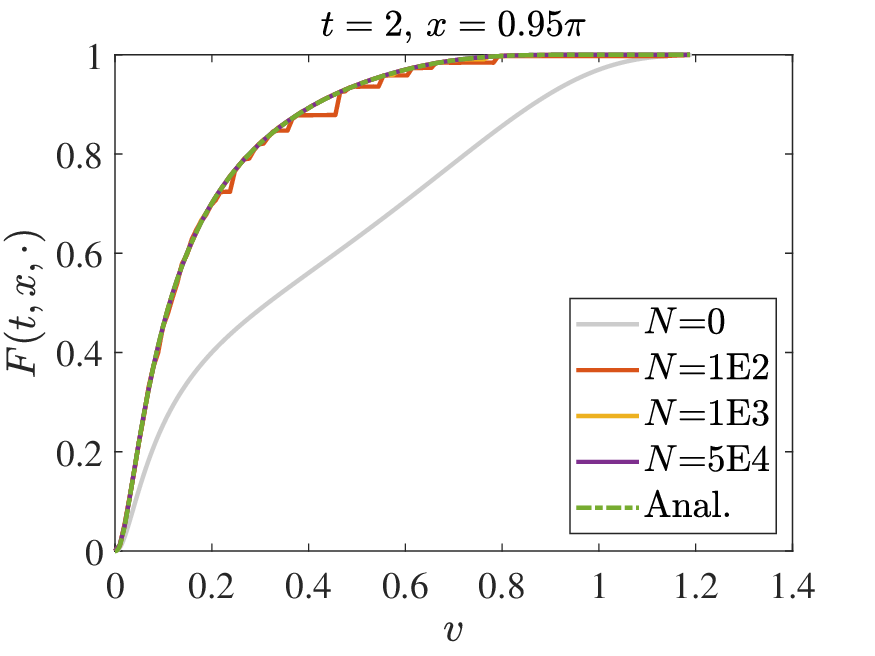}}
  \caption{Approximated PDFs (left) and CDFs (right) at $x=0.95\pi$ and $t=1,2$ for Case \textbf{II} in (\ref{eq:burgers_2}). All results are computed with a fixed $h_v=0.01$ and varying $N$. Here $N=0$ corresponds to the `zero-sample' solutions (\ref{eq:FS0_case2}). Analytical reference solutions are also shown.}
  \label{fig:ctime_fF}
\end{figure}

\begin{figure}[htbp]
  \centering
  \subfloat
  {\includegraphics[height=5cm]{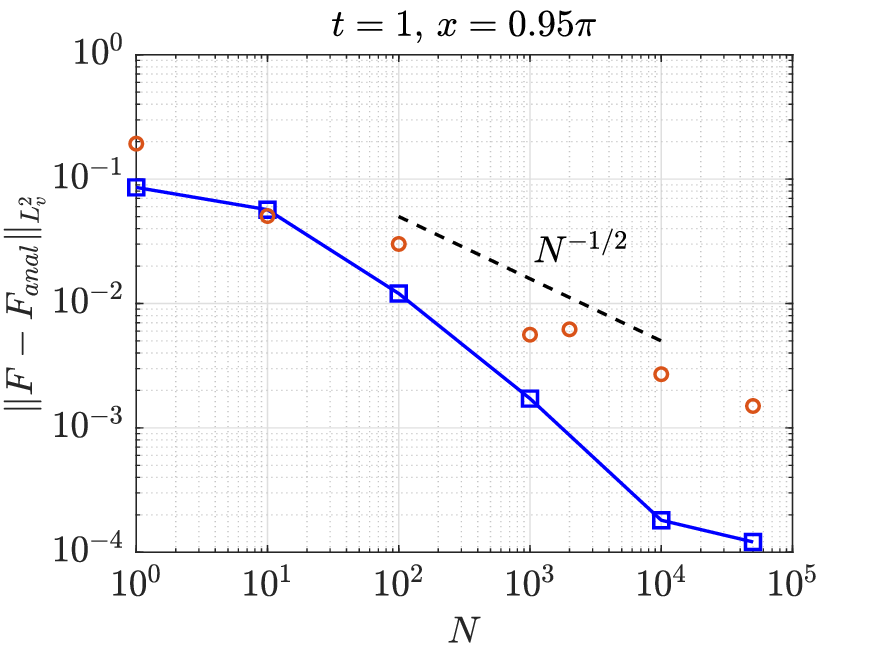}}
  \subfloat
  {\includegraphics[height=5cm]{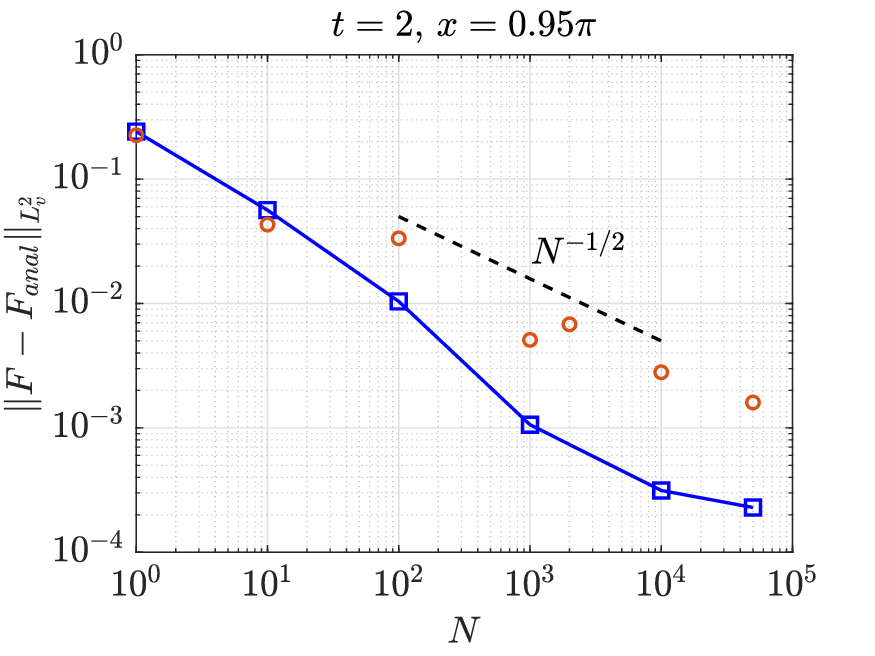}}
  \caption{$L_v^2$-errors between the approximated CDFs and analytical results from (\ref{eq:F_anal}) at $x=0.95\pi$ and $t=1,2$ for Case \textbf{II} in (\ref{eq:burgers_2}). All results are computed with a fixed $h_v=0.01$. Red dots denote the $L^2_v$-errors of the CDFs directly reconstructed from the samples $u_i$ ($i=1,\dots,N$).}
  \label{fig:dF_c2}
\end{figure}

\section{Conclusions} \label{sec:conclusion}

We have developed a statistical framework for viscous balance laws with random initial data. In the scalar setting, we derived statistical conservation laws governing the evolution of (multi-point) probability density functions (PDFs) $f=f(t,x,v)$ and cumulative distribution functions (CDFs) $F=F(t,x,v)$ of the solution. These laws take the form of linear but \emph{unclosed} kinetic-type transport equations in the extended $(t,x,v)$-space (see (\ref{eq:f_eq}) and (\ref{eq:F_eq})). A drift coefficient $m(t,x,v)$, encoding conditional expectations of the viscous contribution in the original balance law, emerges as the key closure term. Notably, the statistical conservation law exhibits a dissipative anomaly: even as viscosity vanishes, the $m$-term persists because it is essential for maintaining the nonnegativity of PDFs.

For numerical computation, we introduced a sampling-based estimator $\hat m_N(t,x,v)$ constructed from realizations of the initial data. This yields a `closed’ equation that allows direct approximation of PDFs/CDFs. For single-point CDFs, we established an a-priori error bound showing that, over finite time, the approximation error is controlled by the estimation error $\|m-\hat m_N\|$. To validate the method, we designed two analytically solvable examples where the solution $u(t,x,\xi)$ remains monotone in the random parameter $\xi$. Numerical tests confirm that the error $\|(F-F_{\mathrm{anal}})(t,x,\cdot)\|$ decreases as the number of samples grows, and is consistently smaller than the error of CDFs reconstructed directly from samples. These results demonstrate that the statistical conservation law framework provides a robust and efficient approach for uncertainty propagation and correlation analysis in viscous balance laws.

Future works may extend the framework to multi-point correlations in both scalar and Navier-Stokes systems, which are fundamental to statistical turbulence theory.

\section*{Acknowledgments}
This work is funded by the Deutsche Forschungsgemeinschaft (DFG, German Research Foundation) - SPP 2410 \textit{Hyperbolic Balance Laws in Fluid Mechanics: Complexity, Scales, Randomness} (CoScaRa). We thank Wen-An Yong (Tsinghua), Ruixi Zhang (Tsinghua) and Martin Oberlack (TU Darmstadt) for the fruitful discussions.

\appendix

\section{Statistical conservation laws for 1D hyperbolic systems} \label{app:sys}

The 1D hyperbolic conservation system for $u=u(t,x,\xi)\in \mathbb R^n$ reads as:
\begin{equation} \label{eq:system}
\begin{split}
  & \partial_tu + \partial_x g(u) = \partial_x(\bm{\epsilon} \partial_x u), \\
  & u(0,x,\xi(\omega)) = u_0(x,\xi(\omega)),
\end{split}
\quad t>0, x\in \mathbb R, \omega\in \Omega,
\end{equation}
where $g:\mathbb R^n \to \mathbb R^n$ is a smooth flux and $\bm{\epsilon}$ is an $n\times n$ diffusion matrix. Following the procedure outlined in Section~\ref{sec:derivation}, we formally obtain the evolution equations for the $K$-point PDF of $u$, $f^{(K)}=f^{(K)}(t,x_1,v_1,\cdots,x_K,v_K)$, given by
\begin{equation}
\left\{
\begin{aligned}
  &\partial_t f^{(K)} + \sum\nolimits_{i=1}^K \nabla_{v_i}\cdot \left( Dg(v_i) \mathcal B_i \right) + \sum\nolimits_{i=1}^K \nabla_{v_i} \cdot \left(m_i f^{(K)} \right) = 0, \\
  &\nabla_{v_i}\cdot \mathcal B_i(t,x_1,v_1,\dots,x_K,v_K) = -\partial_{x_i}f^{(K)}, \quad i=1,\dots,K, \\
  &m_i = m_i(t,x_1,v_1,\dots,x_K,v_K):= \mathbb E_\xi \left[ \partial_{x_i}(\bm{\epsilon} \partial_{x_i} u)|u(t,x_j,\xi) = v_j, \text{ for all } j \right],
\end{aligned}
\right.
\end{equation}
where $\mathcal B_i = \mathcal B_i(t,x_1,v_1,\dots,x_K,v_K) = \mathbb E_\xi \Big[(\partial_{x_i}u) \prod_{j=1}^K \delta(u(t,x_j,\xi)-v_j) \Big]$, and $Dg$ denotes the Jacobian of $g$.

For $K=1$, the above system reduces to the single-point PDF $f=f(t,x,v)$, with $x\in\mathbb R$ and $v\in\mathbb R^n$, governed by
\begin{equation}
\left\{
\begin{aligned}
  &\partial_t f + \nabla_{v}\cdot \left( Dg(v) \mathcal B(t,x,v) \right) + \nabla_{v} \cdot (mf) = 0, \\
  &\nabla_{v}\cdot \mathcal B(t,x,v) = -\partial_{x}f, \\
  &\mathcal B(t,x,v):=\mathbb E_\xi[(\partial_x u)\delta(u(t,x,\xi)-v)], \\
  &m = m(t,x,v):= \mathbb E_\xi \left[ \partial_{x}(\bm{\epsilon} \partial_{x} u)|u(t,x,\xi) = v \right].
\end{aligned}
\right.
\end{equation}
Moreover, additional relations hold between $f$ and $\mathcal B$. Writing $u=(u^j)_{1}^n$, $v=(v^j)_{1}^n$, and $\mathcal B=(B^j)_{1}^n$, we define the marginal PDFs $f^j=f^j(t,x,v^j) = \mathbb E_\xi [\delta(u^j-v^j)] = \int_{\mathbb R^{n-1}} f \prod_{i\ne j}dv^i$. Then by (\ref{eq:chain}) we obtain $\partial_x f^j = -\partial_{v^j} \mathbb E_\xi [\partial_x u^j \delta(u^j-v^j)] = -\partial_{v^j} \int_{\mathbb R^{n-1}} B^j \prod_{i\ne j}dv^i$, which gives
\begin{equation}
  \int_{\mathbb R^{n-1}} \left(B^j + \int^{v_j} \partial_x f(\cdot,v^1,\dots,\tilde v^j,\dots,v^n)d\tilde v^j \right) \prod_{i\ne j}dv^i = 0
\end{equation}
for each $j=1,\dots,n$. These relations indicate the challenges of extending statistical conservation laws to hyperbolic systems.

%    Bibliographies can be prepared with BibTeX using amsplain,
%    amsalpha, or (for "historical" overviews) natbib style.
\bibliographystyle{amsplain}
\bibliography{references}
%    Insert the bibliography data here.

\end{document}